\title{Homological projective duality for linear systems with base locus}
\author{Francesca Carocci}
\author{Zak Tur\v{c}inovi\'c}
\address{Department of Mathematics\\Imperial College London\\London SW7 2AZ\\UK}
\email{f.carocci14@imperial.ac.uk}
\email{z.turcinovic14@imperial.ac.uk}
\newtheorem{thm}{Theorem}[section]
\newtheorem{cor}[thm]{Corollary}
\newtheorem{lem}[thm]{Lemma}
\newtheorem{prop}[thm]{Proposition}
\theoremstyle{remark}
\newtheorem{rem}[thm]{Remark}
\theoremstyle{definition}
\newtheorem{df}[thm]{Definition}
\renewcommand{\P}{\mathbb{P}}
\renewcommand{\O}{\mathcal{O}}
\newcommand{\D}{\mathbf{D}^b}
\newcommand{\C}{\mathcal{C}}
\DeclareMathOperator{\Hom}{Hom}
\DeclarePairedDelimiter{\ang}{\langle}{\rangle}
\newcommand{\F}{\mathscr{F}}
\newcommand{\G}{\mathscr{G}}
\newcommand{\A}{\mathcal{A}}
\newcommand{\B}{\mathcal{B}}
\renewcommand{\H}{\mathcal{H}}
\renewcommand{\TH}{\tilde{\mathcal{H}}}
\newcommand{\TA}{\widetilde{\mathcal{A}}}
\DeclarePairedDelimiterX{\setb}[2]{\{}{\}}{#1\,\delimsize |\,#2}
\DeclareMathOperator{\Ext}{Ext}
\DeclareMathOperator{\Bl}{Bl}
\newcommand{\X}{\tilde{X}}
\newcommand{\E}{\tilde{E}}
\newcommand{\id}{\mathrm{id}}
\newcommand{\CX}{\check{X}}
\newcommand{\cpi}{\check{\pi}}
\newcommand{\CE}{\check{E}}
\newcommand{\cj}{\check{j}}
\newcommand{\cp}{\check{p}}
\newcommand{\ci}{\check{i}}
\renewcommand{\L}{\mathbb{L}}
\newcommand{\R}{\mathbb{R}}
\newcommand{\I}{\mathcal{I}}
\DeclarePairedDelimiter{\abs}{\lvert}{\rvert}
\newcommand{\dash}{\text{--}}
\newcommand{\N}{\mathcal{N}}
\DeclareMathOperator{\Cone}{Cone}
\newcommand{\K}{\mathcal{K}}
\newcommand{\T}{\mathcal{T}}
\DeclareMathOperator{\RHom}{\mathbf{R}Hom}
\newcommand{\TC}{\tilde{\C}}
\newcommand{\TE}{\tilde{E}}
\newcommand{\perf}{\mathrm{perf}}
\newcommand{\tpi}{\tilde{\pi}}
\DeclarePairedDelimiterX{\rest}[2]{.}{\rvert_{#2}}{#1}
\newcommand{\res}{\rest*}
\newcommand{\tj}{\tilde{j}}
\newcommand{\tp}{\tilde{p}}
\newcommand{\ti}{\tilde{i}}
\begin{document}
\begin{abstract}
    We show how blowing up varieties in base loci of linear systems gives a procedure for creating
    new homological projective duals from old.
    
    Starting with a HP dual pair $X,Y$ and smooth orthogonal linear sections~$X_L,Y_L$, we prove
    that the blowup of $X$ in $X_L$ is naturally HP dual to~$Y_L$. The result does not need~$Y$ to
    exist as a variety, i.e.\ it may be "noncommutative".
    
    We extend the result to the case where the base locus~$X_L$ is a multiple of a smooth variety
    and the universal hyperplane has rational singularities; here the HP dual is a categorical
    resolution of singularities of~$Y_L$.

    Finally we give examples where, starting with a noncommutative~$Y$, the above process
    nevertheless gives geometric HP duals.
\end{abstract}
\maketitle
\setcounter{tocdepth}{1}
\tableofcontents

\section{Introduction}
One of the most powerful tools for investigating derived categories of algebraic varieties and their
semiorthogonal decompositions is Kuznetsov's homological projective duality~\cite{kuznetsovhpd}. It
is a beautiful theory but it's hard to produce geometric examples; however some actual examples do
exist~\cite{kuznetsovquadric,kuznetsovgrassmannian,kuznetsovcubic,bernardarabolognesifaenzi,rennemo}.
HP duality starts with a base point free linear system on some variety~$X$; based on a guess of
Calabrese and Thomas~\cite{calabresethomas} we see that for a sublinear system with base locus one
can consider a natural HPD problem on the blowup of~$X$ in the base locus. We show that this new HPD
problem is closely related to the original one. In particular we obtain a procedure for constructing
new HP duals from old:

\begin{thm}
    \label{thm:intro}
    Let $X\to\P(V)$ and~$Y\to\P(V^*)$ be a HP dual pair with respect to the Lefschetz decomposition
    of~$\D(X)$ given by
    \[
        \D(X)=\ang{\A_0,\A_1(1),\dots,\A_{i-1}(i-1)}.
    \]
    Then for any sublinear system~$L\subset V^*$, such that the corresponding linear
    sections~$X_L,Y_L$ have expected dimension, we have an HP dual pair $\Bl_{X_L}X\to\P(L^*)$
    and~$Y_L\to\P(L)$ with respect to the following Lefschetz decomposition of~$\D(\Bl_{X_L}X)$:
    \[
        \D(\Bl_{X_L}X)=\ang*{\TA_0,\TA_1(1),\dots,\TA_{l-2}(l-2)},
    \]
    where~$l=\dim L$ and the pieces~$\TA_k$ are
    \[
        \TA_k\coloneqq\begin{cases}
            \ang*{\pi^*\A_k\otimes\O((l-1)E),\D(X_L)_{-l+1}}&\text{if $k<i$,}\\
            \D(X_L)_{-l+1}&\text{if $k\geq i$.}
        \end{cases}
    \]
    Here~$\pi\colon\Bl_{X_L}X\to X$ is the projection from the blowup and~$E$ is the exceptional
    divisor.
\end{thm}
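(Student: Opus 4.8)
The plan has two parts, dictated by the definition of homological projective duality: one must (a)~verify that $\ang*{\TA_0,\TA_1(1),\dots,\TA_{l-2}(l-2)}$ really is a Lefschetz decomposition of $\D(\Bl_{X_L}X)$ with respect to the line bundle $\O(1)\coloneqq\pi^*\O_X(1)\otimes\O(-E)$ pulled back from $\P(L^*)$ (this bundle is the meaning of the twist ``$(1)$'' on the blowup side), and (b)~identify the HPD category attached to this decomposition---an admissible subcategory of $\D(\TH)$, where $\TH\subset\Bl_{X_L}X\times\P(L)$ is the universal hyperplane section---with $\D(Y_L)$, compatibly with the projections to $\P(L)$. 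We sketch each; the second is the substantial one.

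For (a) the plan is to begin with Orlov's blowup formula. Since $X_L$ is smooth of the expected codimension $l$ in $X$, this reads
\[
    \D(\Bl_{X_L}X)=\ang*{\pi^*\D(X),\iota_*(\rho^*\D(X_L)\otimes\O(E)),\dots,\iota_*(\rho^*\D(X_L)\otimes\O((l-1)E))},
\]
with $\iota\colon E\hookrightarrow\Bl_{X_L}X$ and $\rho\colon E\to X_L$, namely $\pi^*\D(X)$ together with $l-1$ twisted copies of $\D(X_L)$. One then substitutes $\D(X)=\ang*{\A_0,\A_1(1),\dots,\A_{i-1}(i-1)}$ into the first block and reorganises the collection by a sequence of mutations: each $\pi^*\A_k$ gets twisted by $\O((l-1)E)$, and the $l-1$ exceptional copies of $\D(X_L)$ are redistributed, one becoming the tail $\D(X_L)_{-l+1}$ of each Lefschetz piece $\TA_k$, yielding a decomposition of exactly the stated shape. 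The geometric fact making the bookkeeping go through is that $E\cong\P(L^*)\times X_L$ with $\Bl_{X_L}X\to\P(L^*)$ restricting to the projection, so $\O(1)$ restricts on $E$ to the pullback of $\O_{\P(L^*)}(1)$; consequently the exceptional copies of $\D(X_L)$ and their $\O(1)$-twists already realise a Beilinson-type decomposition along the $\P^{l-1}$-factor of $E$, and the regrouping is unobstructed.

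For (b) the plan is to analyse $\TH$ by comparing it to the universal hyperplane section $\H_X\subset X\times\P(V^*)$ of $X$. Unwinding definitions, a point of $\TH$ over $X\setminus X_L$ is a point of $\H_X$ lying over $\P(L)\subset\P(V^*)$, while a point over $E$ is a point of the universal hyperplane section of $E\to\P(L^*)$; assembling these gives a morphism $\TH\to\H_X\times_{\P(V^*)}\P(L)$ which is an isomorphism over $X\setminus X_L$ and over $X_L\times\P(L)$ contracts a $\P^{l-2}$-bundle---indeed $\H_X\times_{\P(V^*)}\P(L)$ is typically singular along $X_L\times\P(L)$ and $\TH$ is its blowup there. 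The HPD theorem for $(X,Y)$ supplies the $\P(V^*)$-linear semiorthogonal decomposition of $\D(\H_X)$ with leading piece $\D(Y)$ and remaining pieces generated by $\A_1(1),\dots,\A_{i-1}(i-1)$; one pushes this through the correspondence $\TH\to\H_X\times_{\P(V^*)}\P(L)\hookrightarrow\H_X$ together with the projective-bundle structure of $\TH$ over $\Bl_{X_L}X$ and the resolution data over $X_L\times\P(L)$, then mutates. In the resulting decomposition of $\D(\TH)$ exactly one block is a copy of $\D(Y_L)$ and the rest are precisely those prescribed by the definition of the HPD category of $(\Bl_{X_L}X,\TA_\bullet)$ and the projection $\TH\to\P(L)$; this gives the asserted equivalence. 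When $Y$, hence $Y_L$, exists only as a noncommutative variety the argument is unchanged, reading $\D(Y)$ and $\D(Y_L)$ as the corresponding admissible subcategories of $\D(\H_X)$ and $\D(\H_X\times_{\P(V^*)}\P(L))$.

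The main obstacle is step (b), and within it the geometry of $\TH$: it is neither the restricted universal hyperplane section of $X$ nor a naive blowup of $X_L\times\P(L)$, so pinning down its birational model---while keeping track of every twist by $\O_X(1)$ and by $\O(E)$---is real work, and, because $\H_X\times_{\P(V^*)}\P(L)$ may be singular, one must thread the HPD theorem for $X$ through the smooth model $\TH$ rather than base-changing along $\P(L)\hookrightarrow\P(V^*)$ directly. Once $\TH$ is understood, step (a) and the remainder of step (b) reduce to mutation calculus.
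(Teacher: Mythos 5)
Your overall strategy is the paper's: the engine is the isomorphism $\TH\cong\Bl_{X_L\times\P(L)}\H_L$, followed by Orlov's blowup formula and mutations (tensoring by $\O(\tilde{E})$) to interlace the pieces, and your part~(a) is exactly the paper's construction of the Lefschetz decomposition. But there is a genuine misstep in part~(b). You assert that $\H_L=\H\times_{\P(V^*)}\P(L)$ is ``typically singular along $X_L\times\P(L)$'' and conclude that one must \emph{not} base-change the HPD decomposition along $\P(L)\hookrightarrow\P(V^*)$ directly. Under the hypotheses of the theorem this is false: since $X_L$ is smooth of the expected codimension~$l$, the differentials of a basis of $L$ are independent along $X_L$, so every member of $L$ is smooth along the base locus; hence $\H_L\to\P(L)$ is smooth near $X_L\times\P(L)$, while away from $X_L$ the projection $\H_L\to X$ is a $\P^{l-2}$-bundle, and $\H_L$ is smooth. (It becomes singular only for base loci with multiplicity, where the paper accordingly proves the weaker statement that the HP dual is a categorical resolution of $\C_L$.) This smoothness matters twice: it is needed to invoke Orlov's theorem for $\TH\to\H_L$ at all, and it is precisely what makes Kuznetsov's \emph{faithful base change} --- the step you decline to take --- applicable, yielding
\[
    \D(\H_L)=\ang*{\Phi_{Y_L\to\H_L}^{\G_L}(\D(Y_L)),\A_1(1)\boxtimes\D(\P(L)),\dots,\A_{i-1}(i-1)\boxtimes\D(\P(L))}.
\]
Without this (or an equivalent mechanism) your argument never actually produces $\D(Y_L)$ inside $\D(\TH)$: ``pushing the decomposition of $\D(\H)$ through the correspondence'' \emph{is} base change, and the hypotheses $\dim X_L=\dim X-l$, $\dim Y_L=\dim Y-\dim V+l$ are there exactly to make it faithful.

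Two smaller omissions. First, identifying the exceptional pieces of the blowup $\TH\to\H_L$ with $i_{\TH}^*(\D(X_L)_k\boxtimes\D(\P(L)))$ --- which is what lets the mutated decomposition be compared with the $\P(L)$-linear one prescribed by Definition~\ref{df:hpd} --- uses that the relevant face of the blowup diagram is an exact Cartesian square. Second, the definition of HP duality requires more than an equivalence of $\D(Y_L)$ with a component of $\D(\TH)$: the embedding must be a Fourier--Mukai functor with kernel supported on (pushed forward from) $Y_L\times_{\P(L)}\TH$, so one must check that composing $\tilde{\pi}^*\circ\Phi_{Y_L\to\H_L}^{\G_L}$ with the mutations (which amount to a twist by $\O((l-2)\tilde{E})$) preserves this property. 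Neither point is hard, but both belong to the proof and your proposal does not address them.
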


The fact that this is true can be seen almost immediately by thinking fibrewise. Indeed, by
assumption we know that the interesting part of the derived category of a hyperplane section $X_H$
of~$X\to\P(V)$ is just the derived category of the corresponding fibre of~$Y\to\P(V^*)$:
\[
    \D(X_H)=\ang{\D(Y_H),\A(1),\dots,\A(i-1)}.
\]
Obviously a hyperplane section of~$\Bl_{X_L}X\to\P(L^*)$ is just~$\Bl_{X_L}X_H$ and thus by Orlov's
theorem~\cite{orlov} one notices that the interesting part of its derived category is
also~$\D(Y_H)$. However, we are restricting our attention to only those hyperplane sections that
contain~$X_L$ and thus the expected HP dual must be the restriction~$Y_L\to\P(L)$ of the original
one.

The proof now consists of making the above work in families by writing the universal hyperplane
section~$\TH$ of the blowup as a blowup itself:
\begin{equation}
    \label{eq:intro}
    \TH=\Bl_{X_L\times\P(L)}\H_L.
\end{equation}
Afterwards it's just a matter of applying Orlov's theorem and performing some mutations to obtain
the result. The whole story works actually more generally if we start with a noncommutative HP dual.
Furthermore, if we start with a linear system~$L$ whose base locus is of the form~$mZ$ for some
smooth variety $Z$ with~$m\geq 1$, then we have the following:

\begin{thm}
    There is a natural Lefschetz decomposition for~$\D(\Bl_ZX)$ with respect to the line
    bundle~$\pi^*\O_X(1)(-mE)$ such that, if~$\H_L$ has only rational singularities, the HP dual
    of~$\Bl_ZX\to\P(L)$ is a categorical resolution of singularities~\cite{kuznetsovsingularities}
    of the interesting part of~$\D(\H_L)$.
\end{thm}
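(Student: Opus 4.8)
The plan is to follow the strategy of the proof of Theorem~\ref{thm:intro}, with one essential replacement: Orlov's blowup formula, which there expresses the derived category of the universal hyperplane section in terms of $\D(\H_L)$, is unavailable because $\H_L$ is now singular, and its role is played instead by a categorical resolution of $\D(\H_L)$ in the sense of~\cite{kuznetsovsingularities}. First I would analyse the universal hyperplane section $\TH$ of $\Bl_ZX\to\P(L)$ with respect to $\O(1)\coloneqq\pi^*\O_X(1)(-mE)$. Pulling a section $s_H$ cutting out $H\in\P(L)$ back to $\Bl_ZX$ gives a section vanishing to order exactly $m$ along $E$ — precisely the assertion that the base scheme of $L$ on $X$ is $mZ$ — so after dividing by the $m$-th power of the canonical section of $\O(E)$ one obtains a section $\tilde s_H$ of $\O(1)$ cutting out $\TH$. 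Reading off the fibres of $\TH\to\Bl_ZX$ shows it is a projective bundle, of relative dimension $\dim L-2$: over $\tilde x\notin E$ the fibre is the hyperplane of sections vanishing at $\pi(\tilde x)$, and over $\tilde x\in E$ it is cut out by the order-$m$ leading term of $s_H$ normal to $Z$, which is surjective onto the relevant fibre of $\mathrm{Sym}^mN_{Z/X}^*\otimes\O_X(1)$ again because the base scheme is exactly $mZ$; in particular $\TH$ is smooth. On the other hand the projection $\Bl_ZX\times\P(L)\to X\times\P(L)$ carries $\TH$ birationally onto $\H_L$, and, exactly as for~\eqref{eq:intro}, $\TH$ is the strict transform of $\H_L$ in $\Bl_{Z\times\P(L)}(X\times\P(L))=\Bl_ZX\times\P(L)$, i.e.\ $\TH=\Bl_{Z\times\P(L)}\H_L$. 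Thus $\sigma\colon\TH\to\H_L$ is a resolution of singularities, and the hypothesis that $\H_L$ has rational singularities gives $\R\sigma_*\O_\TH=\O_{\H_L}$, so that $\L\sigma^*$ is fully faithful on perfect complexes.

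Since $X$ and $Z$ are smooth, Orlov's blowup formula~\cite{orlov} does apply to $\Bl_ZX$ itself, expressing $\D(\Bl_ZX)$ through $\pi^*\D(X)$ together with $c-1$ twisted copies of $\D(Z)$, where $c=\codim(Z,X)$. Inserting the Lefschetz decomposition $\D(X)=\ang{\A_0,\dots,\A_{i-1}(i-1)}$ and mutating precisely as in the proof of Theorem~\ref{thm:intro} — the one difference being the value of the twist by $E$ forced by the polarization $\pi^*\O_X(1)(-mE)$ — produces a Lefschetz decomposition $\D(\Bl_ZX)=\ang{\TA_0,\TA_1(1),\dots,\TA_{r-1}(r-1)}$ with respect to $\pi^*\O_X(1)(-mE)$, in which each $\TA_k$ is assembled from a twist of $\pi^*\A_k$ together with several $E$-twisted copies of $\D(Z)$ (and consists only of the latter when $k$ is large); nestedness and semiorthogonality are verified as before. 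Kuznetsov's homological projective duality theorem~\cite{kuznetsovhpd} then yields a semiorthogonal decomposition
\[
    \D(\TH)=\ang*{\C,\ \TA_1(1)\boxtimes\D(\P(L)),\ \dots,\ \TA_{r-1}(r-1)\boxtimes\D(\P(L))},
\]
whose first component $\C$ is by definition the HP dual of $\Bl_ZX\to\P(L)$.

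It remains to identify $\C$. Base change of the HPD decomposition of $\D(\H)$ to the linear section $\H_L$ gives
\[
    \D(\H_L)=\ang*{\C',\ \A_1(1)\boxtimes\D(\P(L)),\ \dots,\ \A_{i-1}(i-1)\boxtimes\D(\P(L))},
\]
whose first component $\C'$ — equal to $\D(Y_L)$ when $Y$ exists as a variety — is by definition the interesting part of $\D(\H_L)$. I would then show that $\R\sigma_*$ and $\L\sigma^*$ respect these two decompositions: $\R\sigma_*$ restricts to an equivalence from the $\pi^*\A_k$-part of $\TA_k(k)\boxtimes\D(\P(L))$ onto $\A_k(k)\boxtimes\D(\P(L))$ and annihilates the $E$-twisted $\D(Z)$-parts of the $\TA_k$ — here one uses the precise twists chosen above, so that the cohomology of the relevant line bundles along the $\P^{c-1}$-fibres of $E$, and along the degree-$m$ hypersurface fibres of the exceptional locus of $\sigma$, vanishes — while $\sigma$ is an isomorphism away from that exceptional locus. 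It follows that $\R\sigma_*$ and $\L\sigma^*$ descend to functors $\C\to\C'$ and $\C'_{\perf}\to\C$; since $\C$ is smooth and proper (being admissible in $\D(\TH)$ with $\TH$ smooth and projective) and the composite $\C'_{\perf}\to\C\to\C'$ is the identity, this exhibits $\C$ as a categorical resolution of $\C'$.

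The main obstacle is this last step. In Theorem~\ref{thm:intro} the corresponding identification is a purely formal chain of mutations inside a single Orlov decomposition of $\D(\TH)$, whereas here $\H_L$ is genuinely singular and the comparison must be routed through the resolution $\sigma$: one has to verify directly that $\R\sigma_*$ kills exactly the new Lefschetz pieces and is an equivalence on the remaining ones, and then check Kuznetsov's axioms for the induced functors. The rational singularities hypothesis is exactly what is needed at this point — it guarantees $\R\sigma_*\O_\TH=\O_{\H_L}$, hence that $\L\sigma^*$ is fully faithful on perfect complexes, hence that $\C$ is a genuine categorical resolution of $\C'$ and not merely a category equipped with a functor to it.
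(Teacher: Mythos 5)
Your proposal is correct and follows essentially the same route as the paper: the Lefschetz decomposition comes from Orlov's theorem on $\Bl_ZX$ plus the mutations of Proposition~\ref{prop:lefgeneral}, the key geometric input is $\TH\cong\Bl_{Z\times\P(L)}\H_L$ as a resolution $\tpi$ of the singular $\H_L$, rational singularities give $\tpi_*\tpi^*\cong\id$ on perfect complexes, and the categorical resolution is exhibited by restricting the adjoint pair $(\tpi^*,\tpi_*)$ to the interesting components. The only cosmetic difference is that the paper organises the codomain checks by first mutating the $\D(Z)$-pieces past $\TC$ and then computing Homs (reducing to $\tp_*\O_{\TE}(k)=0$), whereas you phrase the same vanishing as $\R\tpi_*$ annihilating those pieces.
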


The idea and the techniques of the proof are basically the same as in Theorem~\ref{thm:intro}. We
still have the isomorphism~\eqref{eq:intro} which tells us that~$\D(\TH)$ is a categorical
resolution of singularities of~$\D(\H_L)$. Restricting to the relevant subcategories the result
immediately follows.

Allowing base locus with multiplicity leads us to consider the final two examples where we see an
interesting phenomenon: starting with a noncommutative HP dual the blowing up process yields a
geometric HP dual pair. More precisely we consider the degree~$3$ Veronese embedding
\[
    \P^5\hookrightarrow\P(H^0(\O_{\P^5}(3))^*)
\]
with the standard Lefschetz decomposition
of~$\D(\P^5)$ with respect to~$\O_{\P^5}(3)$. It is known that the HP dual in this case is a
noncommutative K3-fibration
\[
    \C\to\P(H^0(\O_{\P^5}(3))).
\]
By setting~$L$ to be the linear system of cubics that are singular at a fixed point~$P\in\P^5$ we
obtain a base point free linear system $L=H^0(\pi^*\O_{\P^5}(3)(-2E))$ on $\Bl_P\P^5$,
where~$\pi\colon\Bl_P\P^5\to\P^5$ is the blowup and~$E$ is its exceptional divisor. We can
equip~$\D(\Bl_P\P^5)$ with a Lefschetz decomposition (similar to the one in Theorem~\ref{thm:intro})
with respect to the line bundle~$\pi^*\O_{\P^5}(3)(-2E)$. We show that the HP dual
of~$\Bl_P\P^5\to\P(L^*)$ is generically a K3-fibration $\CX\to\P(L)$,
where~$\CX\subset\P^4\times\P(L)$ is the intersection of a universal~$(2,1)$ and a universal~$(3,1)$
divisor.

For the second example we take~$L$ to be the linear system of cubics containing a fixed
plane~$\P(W)\cong\P^2\subset\P^5$. The HP dual we obtain in this case is the noncommutative variety
$(\P(W')\times\P(L),\C_0)$, where~$\P(W')$ is the orthogonal complement of $\P(W)\subset\P^5$
and~$\C_0$ is an even Clifford algebra sheaf on~$\P(W')\times\P(L)$. Both of these results are
straightforward extensions of Kuznetsov's results on cubic fourfolds~\cite{kuznetsovcubic}.

\subsection*{Acknowledgements}
We are pleased to thank Richard Thomas for posing the problem, for teaching us the subject and for
all the helpful discussions.  Further thanks go to the members of the first year LSGNT cohort for
all the helpful conversations; special thanks go to Luca Battistella, Pierrick Bousseau, Otto
Overkamp and Jakub Witaszek.

\section{Preliminaries}

\subsection{Notation and conventions}
Let~$X$ be an algebraic variety. We always work over an algebraically closed field~$\mathbb{K}$ of
characteristic zero. We will denote the bounded derived category of coherent sheaves on $X$
by~$\D(X)$. We will abuse notation and denote the total derived tensor product by~$\otimes$ and the
total derived pushforward and pullback of a map $f$ by $f_*$ and ~$f^*$, respectively. For a sheaf
$\F$ on~$X$ we will denote its dual by~$\F^{\vee}$, whereas for a vector space~$V$ we will denote
its dual by~$V^*$. For two objects~$\F,\G\in\D(X)$ we will denote by~$\Hom(\F,\G)$ the set
of morphisms from $\F$ to $\G$ in $\D(X)$, whereas~$\RHom(\F,\G)$ shall denote the derived
global~$\Hom$. In particular we identify
\[
    \Hom(\F,\G[k])=H^0(\RHom(\F,\G[k]))=\Ext^k(\F,\G).
\]

\subsection{Homological projective duality}
\label{sec:hpd}
Consider a smooth projective variety~$X$ together with a regular map~$f\colon X\to\P(V)$ for some
finite-dimensional vector space~$V$. Without loss of generality we will assume that the image of~$f$
is not contained in a hyperplane. Note that this is equivalent to giving an effective line bundle
$\O_X(1)\coloneqq f^*\O_{\P(V)}(1)$ on~$X$ together with a base point free linear
system~$V^*\subseteq H^0(\O_X(1))$.

\begin{df}
    A \emph{Lefschetz decomposition} of~$\D(X)$ with respect to a fixed line bundle~$\O_X(1)$ is a
    semiorthogonal decomposition of the form
    \begin{equation}
        \label{eq:stdlef}
        \D(X)=\ang{\A_0,\A_1(1),\dots,\A_{i-1}(i-1)},
    \end{equation}
    where~$0\subset\A_{i-1}\subset\dots\subset\A_0\subset\D(X)$ are full triangulated subcategories.
    Here,
    \[
        \A_k(k)=\setb{\F\otimes\O_X(k)}{\F\in\A_k}.
    \]
    If~$\A_0=\dots=\A_{i-1}$ we call the Lefschetz decomposition \emph{rectangular}.
\end{df}

\begin{rem}
    \label{prop:hyperplane}
    Recall that a Lefschetz decomposition gets its name from the fact that for any hyperplane
    section $X_H$ of~$X\to\P(V)$ we have:
    \begin{enumerate}
        \item The restriction of the derived pullback
            functor~$i^*|_{\A_k(k)}\colon\A_k(k)\to\D(X_H)$ is fully faithful for~$k\geq 1$,
        \item $i^*(\A_1(1)),\dots,i^*(\A_{i-1}(i-1))$ form a semiorthogonal collection in~$\D(X_H)$.
    \end{enumerate}
    Consequently we have a semiorthogonal decomposition of the form
    \[
        \D(X_H)=\ang{\C_H,\A_1(1),\dots,\A_{i-1}(i-1)},
    \]
    for some triangulated subcategory~$\C_H$.
\end{rem}

Let~$\H$ be the universal hyperplane section of~$X\to\P(V)$. Explicitly this means
\[
    \H=\setb{(p,[s])\in X\times\P(V^*)}{s(p)=0},
\]
which is the zero locus of the tautological section in
\[
    H^0(\O_{X\times\P(V^*)}(1,1))\cong H^0(\O_X(1))\otimes V.
\]
We now recall Kuznetsov's definition of \emph{homological projective duals}:

\begin{df}
    \label{df:hpd}
    Let~$X\to\P(V)$ be equipped with a Lefschetz decomposition as above. A projective variety~$Y$
    together with a regular map~$Y\to\P(V^*)$ is called \emph{homological projective dual} of~$X$ if
    there is an object~$\F\in\D(Y\times_{\P(V^*)}\H)$ such that the corresponding Fourier--Mukai
    transform~$\Phi_{Y\to\H}^{\F}\colon\D(Y)\to\D(\H)$ is fully faithful and there is a
    semiorthogonal decomposition
    \[
        \D(\H)=\ang*{\Phi_{Y\to\H}^{\F}\left(\D(Y)\right),\A_1(1)\boxtimes\D(\P(V^*)),\dots,\A_{i-1}(i-1)\boxtimes\D(\P(V^*))}.
    \]
    Here~$\A_k(k)\boxtimes\D(\P(V^*))$ is shorthand
    for~$i_{\H}^*\left(\A_k(k)\boxtimes\D(\P(V^*))\right)$, where~$i_{\H}$ is the inclusion of the
    divisor~$\H\subset X\times\P(V^*)$.
\end{df}

Let~$L\subset V^*$ be a sublinear system and let~$l$ denote its dimension. In the following we
denote its base locus by~$X_L\coloneqq X\times_{\P(V)}\P(L^{\perp})$. For a homological projective
dual~$Y\to\P(V^*)$ we will write~$Y_L\coloneqq Y\times_{\P(V^*)}\P(L)$. Kuznetsov's main theorem on
homological projective duality~\cite{kuznetsovhpd} now tells us precisely what
Definition~\ref{df:hpd} does for us in terms of the linear sections $X_L$ of~$X$:

\begin{thm}[Kuznetsov]
    \label{thm:hpdmain}
    Let~$X\to\P(V)$ and its Lefschetz decomposition be as above and assume~$Y\to\P(V^*)$ is a
    homologically projectively dual variety. Assume furthermore that $\dim V>i$, where~$i$ is the
    number of terms in the Lefschetz decomposition of~$\D(X)$. Then we have the following facts:
    \begin{enumerate}
        \item $Y$ is smooth and~$\D(Y)$ has a dual Lefschetz decomposition given by
            \[
                \D(Y)=\ang{\B_{j-1}(1-j),\dots,\B_1(-1),\B_0},
            \]
            with~$0\subset\B_{j-1}\subset\dots\subset\B_0\subset\D(Y)$ and~$j=\dim
            V-\max\setb{k}{\A_k=\A_0}$. Here the~$\B_k$ are twisted by the pullback
            of~$\O_{\P(V^*)}(1)$.
        \item If $\dim X_L=\dim X-l$ and~$\dim Y_L=\dim Y-\dim V+l$ then there is a triangulated
            category~$\C_L$ such that we have semiorthogonal decompositions
            \begin{align*}
                \D(X_L)&=\ang{\C_L,\A_l(1),\dots,\A_{i-1}(i-l)}\\
                \D(Y_L)&=\ang{\B_{j-1}(\dim V-l-j),\dots,\B_{\dim V-l}(-1),\C_L}.
            \end{align*}
    \end{enumerate}
\end{thm}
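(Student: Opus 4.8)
The plan is to realise both $\D(X_L)$ and $\D(Y_L)$ as semiorthogonal pieces of the derived category of a single auxiliary space — the universal hyperplane section of $X$ taken only over $\P(L)$,
\[
    \H_L\coloneqq\H\times_{\P(V^*)}\P(L)=\setb{(p,[s])\in X\times\P(L)}{s(p)=0}
\]
— and then to write down two semiorthogonal decompositions of $\D(\H_L)$, one "coming from $Y$" and one "coming from $X$", and mutate one into the other; the subcategory they share will be $\C_L$.

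The decomposition coming from $Y$ is obtained by restricting the defining data of HP duality along the linear embedding $\P(L)\hookrightarrow\P(V^*)$: the Fourier--Mukai kernel $\F\in\D(Y\times_{\P(V^*)}\H)$ pulls back to a kernel on $Y_L\times_{\P(L)}\H_L$, and one wants the $\D(\P(V^*))$-linear decomposition of Definition~\ref{df:hpd} to restrict, block by block, to
\[
    \D(\H_L)=\ang*{\Phi_L(\D(Y_L)),\A_1(1)\boxtimes\D(\P(L)),\dots,\A_{i-1}(i-1)\boxtimes\D(\P(L))}.
\]
The real content here is Kuznetsov's \emph{faithful base change}: one must verify that the induced functor $\Phi_L\colon\D(Y_L)\to\D(\H_L)$ is still fully faithful and that the Lefschetz blocks are still semiorthogonal after derived restriction. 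This is exactly where the hypothesis $\dim Y_L=\dim Y-\dim V+l$ is used — it forces $Y\times_{\P(V^*)}\P(L)$ to be $\mathrm{Tor}$-independent, so that derived and underived restriction agree and no spurious contributions appear.

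The decomposition coming from $X$ is read off the other projection $q\colon\H_L\to X$, which is a $\P^{l-2}$-bundle over $X\setminus X_L$ and whose fibre jumps to $\P^{l-1}$ over $X_L$. The evaluation map $L\otimes\O_X\to\O_X(1)$ factors through $\I_{X_L}(1)$, and the hypothesis $\dim X_L=\dim X-l$ makes $X_L$ a complete intersection of the expected codimension $l$; after pulling back to $\Bl_{X_L}X$ the evaluation becomes a surjection onto the line bundle $\pi^*\O_X(1)(-E)$ with locally free kernel $\K$ of rank $l-1$, so that $\H_L$ is birational over $X$ to the $\P^{l-2}$-bundle $\P_{\Bl_{X_L}X}(\K)$. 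Running Orlov's semiorthogonal decompositions for blowups and projective bundles~\cite{orlov}, together with the Lefschetz decomposition of $\D(X)$ and with Remark~\ref{prop:hyperplane} — which records precisely how the Lefschetz pieces restrict to a hyperplane section containing $X_L$ — one obtains, after suitable mutations, a second decomposition of $\D(\H_L)$ whose blocks are again $\A_1(1)\boxtimes\D(\P(L)),\dots,\A_{i-1}(i-1)\boxtimes\D(\P(L))$ together with a block repackaging $\D(X_L)$; the same local analysis gives the first displayed formula $\D(X_L)=\ang{\C_L,\A_l(1),\dots,\A_{i-1}(i-l)}$.

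Part~(1) is logically prior and falls out of the $L=V^*$ instance of this machinery: comparing the two semiorthogonal decompositions of the full $\D(\H)$ — the one of Definition~\ref{df:hpd} against the one from the projective bundle formula for $\H\to X$ — and mutating exhibits $\Phi(\D(Y))$ together with an inherited Lefschetz structure, yielding the dual Lefschetz decomposition and the count $j=\dim V-\max\setb{k}{\A_k=\A_0}$; smoothness of $Y$ follows because $\D(Y)$ embeds as an admissible subcategory of $\D(\H)$, which is smooth and proper. With part~(1) in hand, comparing the two decompositions of $\D(\H_L)$ for general $L$: the Lefschetz blocks $\A_k(k)\boxtimes\D(\P(L))$ occur on both sides and cancel under mutation, isolating a single triangulated category $\C_L$ which is simultaneously the orthogonal complement of $\ang{\A_l(1),\dots,\A_{i-1}(i-l)}$ in $\D(X_L)$ and of $\ang{\B_{j-1}(\dim V-l-j),\dots,\B_{\dim V-l}(-1)}$ in $\D(Y_L)$ — exactly~(2). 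I expect the faithful base change step — for $Y\times_{\P(V^*)}\P(L)$ on one side, and for the degenerate map $\H_L\to X$ (especially when $\H_L$ is singular) on the other — to be the main obstacle, and it is precisely there that the expected-dimension hypotheses cannot be dropped.
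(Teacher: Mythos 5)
A preliminary but important point: the paper contains no proof of this statement. It is Kuznetsov's main theorem on homological projective duality, quoted from the cited reference and used throughout as a black box, so there is no internal argument to compare yours against. Judged on its own terms, your outline does reproduce the architecture of Kuznetsov's proof: form $\H_L=\H\times_{\P(V^*)}\P(L)$, obtain one semiorthogonal decomposition by faithful base change of the HPD decomposition of $\D(\H)$ along $\P(L)\hookrightarrow\P(V^*)$ (and you are right that the expected-dimension hypotheses are exactly what make this base change faithful), obtain a second one from the projection $\H_L\to X$, and mutate to isolate $\C_L$. Your description of $\H_L\to X$ via $\Bl_{X_L}X$ and the rank-$(l-1)$ kernel $\K$ of the evaluation map is correct, but note that this is really the geometric picture of the \emph{present} paper (it is the identification $\TH=\P_{\Bl_{X_L}X}(\K)=\Bl_{X_L\times\P(L)}\H_L$ underlying Proposition~\ref{thm:lefstupid}); Kuznetsov works directly on $\H_L$ with the embedding $X_L\times\P(L)\hookrightarrow\H_L$ and Koszul resolutions. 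That difference is harmless, but it means your route is not the cited one.

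The genuine gap is part~(1), which you dispose of in two sentences although it is the technical core of Kuznetsov's paper and is logically prior to part~(2) (without it one cannot even name the blocks $\B_{j-1}(\dim V-l-j),\dots,\B_{\dim V-l}(-1)$ appearing in $\D(Y_L)$). Comparing the HPD decomposition of $\D(\H)$ with the projective-bundle decomposition over $X$ does not by itself ``exhibit an inherited Lefschetz structure'' on $\Phi(\D(Y))$: one must actually construct the subcategories $\B_k$ (Kuznetsov does this via the composed functors between $\D(X)$, $\D(\H)$ and the HPD category and a delicate induction), prove they are nested, that each twisted piece embeds fully faithfully, that the collection is semiorthogonal and generates, and that its length is exactly $j=\dim V-\max\setb{k}{\A_k=\A_0}$; none of this is a formal consequence of the existence of the two decompositions. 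A secondary but real gap of the same kind: the identification $\D(X_L)=\ang{\C_L,\A_l(1),\dots,\A_{i-1}(i-l)}$ does not follow from ``the same local analysis,'' since $\D(X_L)$ is not a semiorthogonal factor of $\D(\H_L)$; relating the orthogonal of the $\A$-blocks in $\D(\H_L)$ to an orthogonal inside $\D(X_L)$ requires the Koszul resolution of $\O_{X_L\times\P(L)}$ in $\H_L$ and a fully-faithfulness argument that you have not supplied. In short, your proposal is a correct road map of the known proof, but the two destinations that constitute the theorem --- the dual Lefschetz decomposition and the precise block structure of the linear sections --- are asserted rather than reached.
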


\begin{rem}
    Note that a \emph{categorical}\footnote{We will often use "categorical" and "noncommutative"
        interchangeably.} HP dual~$\C$ always exists tautologically. Namely we can just define it to
    be the right orthogonal of the trivial part of~$\D(\H)$:
    \[
        \C\coloneqq\ang*{\A_1(1)\boxtimes\D(\P(V^*)),\dots,\A_{i-1}(i-1)\boxtimes\D(\P(V^*))}^{\perp}.
    \]
    One can then still develop much of the general theory in this categorical setting; we refer to
    Thomas' notes~\cite{thomashpd} for a good introduction to that point of view.
\end{rem}

\section{Linear systems with base locus}
Assume we are given~$f\colon X\to\P(V)$ with some Lefschetz decomposition and a corresponding
homological projective dual~$Y\to\P(V^*)$. Also assume we have a linear subspace~$L\subset V^*$ of
dimension~$l$ such that~$X_L$ is smooth, $\dim X_L=\dim X-l$, and~$\dim Y_L=\dim Y-\dim V+l$. Note
that there is a natural map~$Y_L\to\P(L)$ and a rational map~$X\dashrightarrow\P(L^*)$.  We consider
the blow up of the rational map in its indeterminacy locus $X_L$ to obtain a regular map:
\begin{equation}
    \label{eq:diag0}
    \begin{tikzcd}
        E\ar[r,hook,"j"]\ar[d,"p"]&\Bl_{X_L}X\ar[d,"\pi"]\ar[dr,"\phi"]&\\
        X_L\ar[r,hook,"i"]&X\ar[r,dashed]&\P(L^*)\rlap{\ ,}
    \end{tikzcd}
\end{equation}
where by construction we have
\[
    \phi^*\O_{\P(L^*)}(1)\cong\pi^*\O_X(1)(-E),
\]
and~$E$ is the exceptional divisor of the blowup. From now on write~$\X\coloneqq\Bl_{X_L}X$ and we
obviously always assume~$l\geq 2$.  In order to show that $\X\to\P(L^*)$ and~$Y_L\to\P(L)$ are
HP dual we have to fix a Lefschetz decomposition for~$\D(\X)$. For this we will use Orlov's
decomposition for a blowup~\cite{orlov}.

\subsection{Stupid Lefschetz decomposition}
We first look at the easiest case: when~$\D(X)$ is endowed with the stupid Lefschetz decomposition
and the homological projective dual is just the universal hyperplane section~$\H\to\P(V^*)$. In this
case Orlov's theorem then gives us a semiorthogonal decomposition of the form
\[
    \D(\X)=\ang*{\pi^*\D(X)\otimes\omega_{\X},\D(X_L)_{-l+1},\dots,\D(X_L)_{-1}}.
\]
If we now set
\begin{equation}
    \label{eq:lefstupid}
    \TA_0\coloneqq\ang*{\pi^*\D(X)\otimes\omega_{\X},\D(X_L)_{-l+1}},\quad\TA_1\coloneqq\dots\coloneqq\TA_{l-2}\coloneqq\D(X_L)_{-l+1},
\end{equation}
then we have a Lefschetz decomposition of~$\D(\X)$:

\begin{prop}
    \label{prop:lefstupid}
    The above is a Lefschetz decomposition of~$\D(\X)$ with respect to the line
    bundle~$\pi^*\O_X(1)(-E)$, i.e.\ we have a semiorthogonal decomposition of the form
    \[
        \D(\X)=\ang*{\TA_0,\TA_1(1),\dots,\TA_{l-2}(l-2)},
    \]
    with~$0\subset\TA_{l-2}\subset\dots\subset\TA_0\subset\D(\X)$.
\end{prop}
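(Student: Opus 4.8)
The plan is to verify that (\ref{eq:lefstupid}) defines a Lefschetz decomposition by checking the two defining properties: (i) the listed pieces assemble into a semiorthogonal decomposition of $\D(\X)$, and (ii) the nesting $0\subset\TA_{l-2}\subset\dots\subset\TA_0\subset\D(\X)$ holds once the pieces are twisted appropriately. The starting point is Orlov's blowup decomposition
\[
    \D(\X)=\ang*{\pi^*\D(X)\otimes\omega_{\X},\D(X_L)_{-l+1},\dots,\D(X_L)_{-1}},
\]
where $\D(X_L)_{-k}$ denotes the image of $\D(X_L)$ under $j_*(p^*(-)\otimes\O_E(-k))$ (or a suitable twist thereof) — here $X_L$ has codimension $l$ in $X$, so there are $l-1$ such copies. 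I will take this decomposition as given and simply regroup its terms.

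**First I would** compare the claimed decomposition $\ang*{\TA_0,\TA_1(1),\dots,\TA_{l-2}(l-2)}$ with Orlov's. Twisting $\D(X_L)_{-l+1}$ by $\O_\X(k)=\pi^*\O_X(k)(-kE)$ and using that on the exceptional divisor $\O_\X(-E)|_E\cong\O_E(1)$ (the relative $\O(1)$ of the projective bundle $E=\P(\N_{X_L/X})\to X_L$), one sees that $\D(X_L)_{-l+1}(k)=\D(X_L)_{-l+1+k}$. Hence $\ang*{\TA_1(1),\dots,\TA_{l-2}(l-2)}=\ang*{\D(X_L)_{-l+2},\dots,\D(X_L)_{-1}}$, which is exactly the tail of Orlov's decomposition, and $\TA_0=\ang*{\pi^*\D(X)\otimes\omega_\X,\D(X_L)_{-l+1}}$ is the head together with the first exceptional copy. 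So property (i) is immediate: the claimed decomposition is Orlov's, merely with the first two blocks fused into $\TA_0$. (One should note $\TA_0$ really is a semiorthogonal decomposition internally, i.e.\ $\pi^*\D(X)\otimes\omega_\X$ is left-orthogonal to $\D(X_L)_{-l+1}$; this is again part of Orlov's statement.)

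**The main point to check** — and the only step requiring genuine argument — is the nesting $\TA_{l-2}\subset\dots\subset\TA_1\subset\TA_0$. For the $\TA_k$ with $k\geq 1$ this is trivial since they are all equal to $\D(X_L)_{-l+1}$. The real content is $\TA_1=\D(X_L)_{-l+1}\subset\TA_0=\ang*{\pi^*\D(X)\otimes\omega_\X,\D(X_L)_{-l+1}}$, which holds by construction. So what actually needs proving is that $\TA_0$ is genuinely contained in $\D(\X)$ as a full triangulated subcategory — automatic — and, more substantively, that the decomposition remains valid, i.e.\ that no semiorthogonality is lost by the regrouping. Since semiorthogonal decompositions can always be coarsened by grouping consecutive blocks, (i) follows formally from Orlov once we know the ordering of Orlov's blocks matches: I will need to confirm that $\pi^*\D(X)\otimes\omega_\X$ sits to the \emph{left} of all the $\D(X_L)_{-k}$ and that these are ordered $\D(X_L)_{-l+1},\dots,\D(X_L)_{-1}$ from left to right, which is precisely the form of Orlov's theorem recorded above.

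**The hard part will be** purely bookkeeping: pinning down the exact twist conventions so that the identity $\D(X_L)_{-l+1}(k)=\D(X_L)_{-l+1+k}$ comes out with the right indices, and confirming that $\pi^*\O_X(1)(-E)$ is indeed effective and basepoint-free on $\X$ (so that it makes sense as the polarization defining the Lefschetz structure) — the latter follows from $\phi^*\O_{\P(L^*)}(1)\cong\pi^*\O_X(1)(-E)$ in diagram (\ref{eq:diag0}). No deep input beyond Orlov's theorem is needed; the proposition is essentially a repackaging, and I expect the proof to be short, consisting of the twist computation on $E$ followed by the observation that merging the first two blocks of a semiorthogonal decomposition is always legitimate.
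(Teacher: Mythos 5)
Your proposal is correct and follows essentially the same route as the paper: take Orlov's blowup decomposition as given, observe that the regrouping into the $\TA_k$ is a coarsening of it, and reduce everything to the twist identity $\D(X_L)_k\otimes\pi^*\O_X(1)(-E)=\D(X_L)_{k+1}$, which follows from $j^*\O(-E)\cong\O_E(1)$ and the projection formula. The paper's proof is just a terser version of your argument.
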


\begin{proof}
    All that's left to show is that~$\D(X_L)_k\otimes\pi^*\O_X(1)(-E)=\D(X_L)_{k+1}$, for any~$k$.
    But this follows immediately from~$j^*\O(-E)\cong\O_E(1)$ and the projection formula.
\end{proof}

\begin{prop}
    \label{thm:lefstupid}
    Let~$\X\to\P(L^*)$ be endowed with the Lefschetz decomposition~\eqref{eq:lefstupid}. Then its
    homological projective dual is~$\H_L\to\P(L)$.
\end{prop}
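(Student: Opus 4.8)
The plan is to verify Kuznetsov's definition of homological projective duality (Definition~\ref{df:hpd}) directly for the pair $\X\to\P(L^*)$ and $\H_L\to\P(L)$, with Orlov's blowup formula doing the essential work. Write $\TH\subset\X\times\P(L)$ for the universal hyperplane section of $\X\to\P(L^*)$: it is the zero locus of the tautological section of
\[
\phi^*\O_{\P(L^*)}(1)\boxtimes\O_{\P(L)}(1)=\pi^*\O_X(1)(-E)\boxtimes\O_{\P(L)}(1),
\]
while $\H_L=\H\times_{\P(V^*)}\P(L)\subset X\times\P(L)$ is the zero locus of the tautological section of $\O_X(1)\boxtimes\O_{\P(L)}(1)$ for the linear system $L\subseteq H^0(\O_X(1))$. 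The whole argument rests on the isomorphism~\eqref{eq:intro}, namely $\TH\cong\Bl_{X_L\times\P(L)}\H_L$.

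To prove~\eqref{eq:intro}, observe that $\X\times\P(L)=\Bl_{X_L\times\P(L)}(X\times\P(L))$, and that multiplying the defining section of $\TH$ by the canonical section of $\O(E)$ recovers the pullback of the defining section of $\H_L$; hence $\TH$ is the strict transform of $\H_L$ under this blowup. Since $X_L$ is smooth of the expected codimension $l$, the linear system $L$ cuts $X_L$ out transversally, so that $L$ injects into the Zariski cotangent space $T^*_xX$ for every $x\in X_L$; a Jacobian-criterion computation then shows that $\H_L$ is smooth — in particular along $X_L\times\P(L)$ — and therefore its strict transform coincides with $\Bl_{X_L\times\P(L)}\H_L$, which is moreover smooth, with exceptional divisor $\TE=\TH\cap(E\times\P(L))$ mapping to $X_L\times\P(L)$. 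I expect this geometric identification — in essence, checking that the ambient blowup restricts to the blowup of $\H_L$ — to be the main obstacle; what follows is bookkeeping with Orlov's theorem.

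Granting~\eqref{eq:intro}, apply Orlov's theorem to the blowup $\rho\colon\TH=\Bl_{X_L\times\P(L)}\H_L\to\H_L$ along the smooth centre $X_L\times\P(L)$, of codimension $l-1$; after mutating $\rho^*\D(\H_L)$ to the left, Orlov's decomposition reads
\[
\D(\TH)=\ang*{\rho^*\D(\H_L)\otimes\omega_{\TH},\ \D(X_L\times\P(L))_{-l+2},\dots,\D(X_L\times\P(L))_{-1}},
\]
where $\D(X_L\times\P(L))_m$ is the copy of $\D(X_L\times\P(L))\cong\D(X_L)\boxtimes\D(\P(L))$ embedded by $F\mapsto(\TE\hookrightarrow\TH)_*\bigl((\TE\to X_L\times\P(L))^*F\otimes\O_{\TE}(m)\bigr)$. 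The leftmost term is the image of $\D(\H_L)$ under the Fourier--Mukai transform whose kernel is the structure sheaf of the graph of $\rho$ twisted by $\omega_{\TH}$; since $\rho$ is a morphism over $\P(L)$ this kernel is an object of $\D(\H_L\times_{\P(L)}\TH)$, and the transform is fully faithful because $\rho$ is a blowup — precisely the data Definition~\ref{df:hpd} asks for.

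It remains to identify the other $l-2$ terms with the trivial part $i_{\TH}^*\bigl(\TA_k(k)\boxtimes\D(\P(L))\bigr)$, $k=1,\dots,l-2$, of the Lefschetz decomposition~\eqref{eq:lefstupid}. By Proposition~\ref{prop:lefstupid} one has $\TA_k(k)=\D(X_L)_{-l+1+k}$ inside $\D(\X)$, so the point is a base-change computation: the divisors $\TH$ and $E\times\P(L)$ meet properly along $\TE$ inside $\X\times\P(L)$, the resulting square is Tor-independent, and hence $i_{\TH}^*\circ(j\times\id)_*=(\TE\hookrightarrow\TH)_*\circ(\TE\hookrightarrow E\times\P(L))^*$; applying this to $\D(X_L)_m\boxtimes\D(\P(L))$ and tracking how $\O_E(1)$ and $\O(-E)$ restrict to $\TE$ — the projection-formula manipulation already used in Proposition~\ref{prop:lefstupid} — identifies $i_{\TH}^*\bigl(\TA_k(k)\boxtimes\D(\P(L))\bigr)$ with $\D(X_L\times\P(L))_{k-l+1}$, up to the standard mutations that rearrange a Lefschetz decomposition. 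Thus the two tails span the same subcategory of $\D(\TH)$, the displayed Orlov decomposition becomes the semiorthogonal decomposition required by Definition~\ref{df:hpd}, and $\H_L\to\P(L)$ is homologically projectively dual to $\X\to\P(L^*)$.
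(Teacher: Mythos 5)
Your proposal is correct and follows essentially the same route as the paper: the key isomorphism $\TH\cong\Bl_{X_L\times\P(L)}\H_L$ (you supply the strict-transform and smoothness details that the paper only sketches in the proof and a following remark), then Orlov's theorem with the pullback piece mutated to the far left, the exact-Cartesian-square base change identifying $i_{\TH}^*(\D(X_L)_k\boxtimes\D(\P(L)))$ with $\D(X_L\times\P(L))_k$, and the observation that the kernel $(\tilde\pi\times\id_{\TH})_*\omega_{\TH}$ is supported on the fibre product because $\tilde\pi$ is $\P(L)$-linear. No further mutations are actually needed at the end, since the Orlov decomposition already appears in the order required by Definition~\ref{df:hpd}.
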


\begin{proof}
    Let~$\TH$ denote the universal hyperplane section of~$\X$ with respect to the line
    bundle~$\pi^*\O_X(1)(-E)$. We need to show that there is an
    object~$\F\in\D(\H_L\times_{\P(L)}\TH)$ such that its associated Fourier--Mukai
    functor~$\Phi_{\H_L\to\TH}^{\F}\colon\D(\H_L)\to\D(\TH)$ is fully faithful and we have a
    semiorthogonal decomposition
    \[
        \D(\TH)=\ang*{\Phi_{\H_L\to\TH}^{\F}\left(\D(\H_L)\right),\D(X_L)_{-l+2}\boxtimes\D(\P(L)),\dots,\D(X_L)_{-1}\boxtimes\D(\P(L))}.
    \]
    The easiest way to do that is by observing that $\TH\cong\Bl_{X_L\times\P(L)}\H_L$. Thinking
    fibrewise this is obvious: hyperplane sections of $\phi$ are just strict transforms of
    hyperplane sections of $f\colon X\to\P(V)$ containing the base locus $X_L$. In families, just
    look at $\H_L\in X\times\P(V)$, and repeat the argument to say that $\TH$ is the strict
    transform in $\pi\times\id_{\P(L)}\colon\Bl_{X_L}X\times\P(L)\to X\times\P(L)$ of $\H_L$.  We
    will denote the projection by~$\tilde{\pi}\colon\TH\to\H_L$ and we have the commutative diagram
    \begin{equation}
        \label{eq:diag1}
        \begin{tikzcd}
            &\E\ar[dl,hook]\ar[dd,"\tilde{p}" near
            end]\ar[rr,"\tilde{j}",hook]&&\TH\ar[dl,hook,"i_{\TH}"]\ar[dd,"\tilde{\pi}"]\\
            E\times\P(L)\ar[dd,"p\times\id_{\P(L)}"]\ar[rr,crossing over,"j\times\id_{\P(L)}" near
            end,hook]&&\X\times\P(L)&\\ &X_L\times\P(L)\ar[dl,equals]\ar[rr,"\tilde{i}" near
            end,hook]&&\H_L\ar[dl,"i_{\H_L}",hook]\\
            X_L\times\P(L)\ar[rr,hook,"i\times\id_{\P(L)}"]&&X\times\P(L)\ar[uu,"\pi\times\id_{\P(L)}"
            near start,crossing over,leftarrow]&\rlap{\ ,}
        \end{tikzcd}
    \end{equation}
    where the front and back faces are blowup diagrams. Since the codimension of $X_L\times\P(L)$ in
    $\H_L$ is~$l-1$, applying Orlov's theorem to~$\TH$ we obtain the semiorthogonal decomposition
    \[
        \D(\TH)=\ang*{\tilde{\pi}^*\D(\H_L)\otimes\omega_{\TH},\D(X_L\times\P(L))_{-l+2},\dots,\D(X_L\times\P(L))_{-1}}.
    \]
    Note that we have~$\O(\E)\cong i_{\TH}^*(\O(E)\boxtimes\O_{\P(L)})$ and the top face of the
    diagram~\eqref{eq:diag1} is an exact Cartesian square, and thus one can easily compute
    \[
        \D(X_L\times\P(L))_k=i_{\TH}^*(\D(X_L)_k\boxtimes\D(\P(L))).
    \]
    Finally note that~$\tilde{\pi}(\dash)\otimes\omega_{\TH}$ can obviously be written as a
    Fourier--Mukai functor with kernel pushed forward from the fiber product $\TH\times_{\P(L)}
    \H_L$. Indeed the kernel is just given by the pushforward of the canonical bundle along the
    graph map:
    \[
        (\tilde{\pi}\times\id_{\TH})_*\omega_{\TH}.
    \]
    As the graph map is linear over $\P(L)$ the pushforward factors via~$\TH\times_{\P(L)}\H_L$.
\end{proof}

\begin{rem}
    Note that to use Orlov's decomposition in the proof we implicitly used the fact that~$\H_L$ is
    smooth whenever~$X_L$ is. Indeed, notice that~$\H_L\to X$ is a smooth $\P^{l-2}$-bundle away
    from $X_L$, and~$\H_L\to\P(L)$ is a smooth bundle near~$X_L$: every element of the linear
    system~$L$ is smooth near the base locus~$X_L$.
\end{rem}

\subsection{General Lefschetz decomposition}
Assume now that~$\D(X)$ is endowed with an arbitrary Lefschetz decomposition of the form
\begin{equation}
    \label{eq:stdlef2}
    \D(X)=\ang{\A_0,\A_1(1),\dots,\A_{i-1}(i-1)},
\end{equation}
and that~$Y\to\P(V^*)$ is a corresponding homological projective dual. Again we are hoping
that~$\X\to\P(L^*)$ equipped with some Lefschetz decomposition is homologically projectively dual
to~$Y_L\to\P(L)$. If we want to construct a Lefschetz decomposition of~$\D(\X)$ using all the
information from~\eqref{eq:stdlef2}, one reasonable way of doing that is by interlacing the pieces
from~\eqref{eq:stdlef2} and the pieces~$\D(X_L)_k$ coming from Orlov's theorem. Note that this way
we expect to get a Lefschetz decomposition with~$\max\{i,l-1\}$ terms.  Thus to make
Theorem~\ref{thm:hpdmain} apply we will assume from now on~$i\leq l-1$.

In order to interlace the pieces we will need to perform a series of mutations. The necessary
mutations can be described generally in any semiorthogonal decomposition coming from Orlov's
theorem:

\begin{prop}
    \label{prop:lefgeneral}
    Consider the blowup of a smooth variety~$X$ in a smooth subvariety~$Z$ and let the notation be
    analogous to the one in the diagram~\eqref{eq:diag0}. Then for any~$\F\in\D(X)$ and any
    integer~$k$ we have the following isomorphism:
    \[
        \L_{\D(Z)_{-k}}(\pi^*\F\otimes\O((k-1)E))\cong\pi^*\F\otimes\O(kE).
    \]
\end{prop}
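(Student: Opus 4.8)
The plan is to compute the left mutation directly from the defining triangle, using only Orlov's semiorthogonal decomposition together with the standard adjunctions for the blowup diagram~\eqref{eq:diag0}. Recall that for an object~$\G$ lying in the ambient category, the left mutation~$\L_{\D(Z)_{-k}}\G$ sits in a distinguished triangle
\[
    \mathrm{pr}_{\D(Z)_{-k}}(\G)\to\G\to\L_{\D(Z)_{-k}}(\G),
\]
where~$\mathrm{pr}_{\D(Z)_{-k}}$ denotes the projection onto the admissible subcategory~$\D(Z)_{-k}=j_*(p^*\D(Z)\otimes\O_E(-k))$. So the first step is to identify this projection when~$\G=\pi^*\F\otimes\O((k-1)E)$. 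Using the adjunction between~$j_*$ and~$j^!$ (equivalently~$j^*$, up to a twist and shift by~$\O_E(E)[1]$ since~$E$ is a divisor), and the fact that~$p$ is a projective bundle, one reduces to a computation on~$E$: we need~$p_*\bigl(j^*(\pi^*\F\otimes\O((k-1)E))\otimes\O_E(k)\bigr)$, twisted appropriately. Since~$j^*\pi^*\F=p^*i^*\F$ and~$j^*\O(E)=\O_E(-1)$ (in the convention where~$\O_E(1)$ is the relative hyperplane bundle, i.e.~$\O_E(-E)\cong\O_E(1)$), the bundle~$\O((k-1)E)$ restricts to~$\O_E(-(k-1))=\O_E(1-k)$, so after the twist by~$\O_E(k)$ we are pushing forward~$p^*i^*\F\otimes\O_E(1)$, whose~$p_*$ vanishes because the relative dimension of~$p$ is~$l-1\geq 1$ (more precisely, all the relevant positive twists have no higher cohomology along the fibres either, so the whole derived pushforward vanishes).

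The second step is to feed this vanishing back. If the projection onto~$\D(Z)_{-k}$ is zero, then the mutation triangle degenerates and~$\L_{\D(Z)_{-k}}(\pi^*\F\otimes\O((k-1)E))$ is just~$\pi^*\F\otimes\O((k-1)E)$ itself --- but that is \emph{not} what the statement claims, so the mutation must genuinely be nontrivial, which means I have the twisting conventions backwards and the correct computation gives a nonzero one-dimensional (over~$\D(Z)$) contribution. The honest version: the relevant~$p_*$ that controls~$\mathrm{pr}_{\D(Z)_{-k}}(\pi^*\F\otimes\O((k-1)E))$ is~$p_*(p^*i^*\F\otimes\O_E(k)\otimes\O_E(1-k))=p_*(p^*i^*\F\otimes\O_E(1))$ --- wait, I should instead track which twist makes the object already almost orthogonal. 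The clean way: show that~$\pi^*\F\otimes\O(kE)$ lies in~$\!{}^\perp\D(Z)_{-k}$ restricted to the subcategory generated by~$\D(Z)_{-k}$ and~$\pi^*\F\otimes\O(kE)$, and that there is a triangle of the form
\[
    j_*\bigl(p^*i^*\F\otimes\O_E(-k)\bigr)\otimes(\text{shift})\to\pi^*\F\otimes\O((k-1)E)\to\pi^*\F\otimes\O(kE)
\]
coming from the Koszul/Euler-type short exact sequence~$0\to\O((k-1)E)\to\O(kE)\to j_*\O_E(kE)\to 0$ tensored with~$\pi^*\F$ (note~$j_*\O_E(kE)=j_*\O_E(-k)$). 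This exhibits~$\pi^*\F\otimes\O(kE)$ as a cone, the subobject being in~$\D(Z)_{-k}$, so provided we check that this subobject is exactly the projection --- i.e.\ that~$\pi^*\F\otimes\O(kE)$ is left-orthogonal to~$\D(Z)_{-k}$ --- the cone is the left mutation by definition.

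So the third and final step is the orthogonality check: $\RHom(\pi^*\F\otimes\O(kE),\,j_*(p^*\G\otimes\O_E(-k)))=0$ for all~$\G\in\D(Z)$. By adjunction~$j_*\dashv j^!$ this is~$\RHom(j^*(\pi^*\F\otimes\O(kE)),\,p^*\G\otimes\O_E(-k)\otimes\O_E(E)[-1])$, and~$j^*(\pi^*\F\otimes\O(kE))=p^*i^*\F\otimes\O_E(-k)$; after cancelling the~$\O_E(-k)$ on both sides and using~$\O_E(E)=\O_E(-1)$, this becomes~$\RHom_E(p^*i^*\F,\,p^*\G\otimes\O_E(-1))=\RHom_Z(i^*\F,\,\G\otimes p_*\O_E(-1))$, which vanishes since~$p_*\O_E(-1)=0$ for a projective bundle of relative dimension~$\geq 1$. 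I expect the main obstacle to be purely bookkeeping: pinning down the sign/twist conventions for~$\O_E(E)$ versus~$\O_E(1)$ and for the relative dualising sheaf of~$p$, so that the Koszul triangle and the adjunction computation line up and actually produce~$\O(kE)$ rather than some neighbouring twist. Once the conventions are fixed consistently the argument is three short lines, and it is genuinely general --- it uses nothing about~$\F$ and only that~$\codim_X Z\geq 2$.
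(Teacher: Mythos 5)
Your core idea is the right one and is in fact the paper's: tensoring the short exact sequence $0\to\O((k-1)E)\to\O(kE)\to j_*j^*\O(kE)\to 0$ with $\pi^*\F$ and rotating gives a triangle whose first term $j_*(p^*i^*\F\otimes\O_E(-k))[-1]$ lies in $\D(Z)_{-k}$ and whose third term is $\pi^*\F\otimes\O(kE)$. The paper finishes by computing the right adjoint $\Phi_k^!$ of the inclusion explicitly and matching the first term with $\Phi_k\Phi_k^!(\pi^*\F\otimes\O((k-1)E))$; your alternative of checking that the cone lies in the appropriate orthogonal is equally valid in principle. (Your abandoned first attempt is not salvageable as written: the twist that actually appears there is $\O_E$, whose pushforward is $\O_Z$, and in any case $p_*\O_E(1)\neq 0$ for a projective bundle.)

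The genuine gap is in your final step: you check the wrong orthogonality, and the vanishing you assert is false. With the convention $i_{\A}i_{\A}^!G\to G\to\L_{\A}G$ that you and the paper both use, a triangle $A\to G\to C$ with $A\in\A$ exhibits $C\cong\L_{\A}G$ precisely when $C$ lies in the \emph{right} orthogonal $\A^{\perp}$, i.e.\ $\Hom(\A,C)=0$; this is the uniqueness of the decomposition triangle for $\ang{\A^{\perp},\A}$. You instead assert $\RHom(\pi^*\F\otimes\O(kE),\,j_*(p^*\G\otimes\O_E(-k)))=0$, which is membership in ${}^{\perp}\A$ --- and this $\Hom$ does not vanish: since $j_*$ sits in the \emph{second} slot the relevant adjunction is $j^*\dashv j_*$ with no dualising twist (the twist $\O_E(E)[-1]$ you insert belongs to $j^!$, the right adjoint of $j_*$, which only enters when $j_*$ is in the first slot), so the group equals $\RHom(p^*i^*\F,p^*\G)\cong\RHom(i^*\F,\G)$, generically nonzero. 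Your two errors cancel to produce a spurious vanishing. The correct check is
$\RHom(j_*(p^*\G\otimes\O_E(-k)),\,\pi^*\F\otimes\O(kE))\cong\RHom(p^*\G,\,p^*i^*\F\otimes\O_E(-1))[-1]\cong\RHom(\G,\,i^*\F\otimes p_*\O_E(-1))[-1]=0$,
which is your computation with the two slots exchanged and $j^!$ now used legitimately; with that replacement the argument closes.
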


\begin{proof}
    We will write~$\Phi_k$ for the embedding of $\D(Z)$ into~$\D(\Bl_ZX)$, i.e.
    \[
        \Phi_k(\dash)\coloneqq j_*p^*(\dash)\otimes\O(kE).
    \]
    Now recall that left mutation through~$\Phi_k(\D(Z))$ is by definition given by the
    distinguished triangle
    \[
        \Phi_k(\Phi_k^!(\F))\to\F\to\L_{\Phi_k(\D(Z))}(\F)\to\Phi_k(\Phi_k^!(\F))[1],
    \]
    for any~$\F\in\D(\Bl_ZX)$, where~$\Phi_k^!$ denotes the right adjoint of~$\Phi_k$. Using the
    fact that~$j$ is an embedding of a divisor we obtain
    \[
        \Phi_k^!(\dash)=p_*j^*(\dash\otimes\O((1-k)E))[-1].
    \]
    Thus for any~$\F\in\D(X)$ we compute using~$p_*\O_E\cong\O_Z$:
    \[
        \Phi_k^!(\pi^*\F((k-1)E))=p_*j^*\pi^*\F[-1]=p_*p^*i^*\F[-1]\cong i^*\F[-1],
    \]
    and in particular we have
    \[
        \Phi_k(\Phi_k^!(\pi^*\F\otimes\O((k-1)E)))\cong j_*p^*i^*\F(kE)[-1]\cong j_*j^*\pi^*\F(kE)[-1].
    \]
    Derived tensoring the short exact sequence
    \[
        0\to\O((k-1)E)\to\O(kE)\to j_*j^*\O(kE)\to 0
    \]
    with~$\pi^*\F$ and shifting the resulting distinguished triangle we obtain
    \[
        \L_{\Phi_k(\D(Z))}(\pi^*\F\otimes\O((k-1)E))\cong\pi^*\F\otimes\O(kE).\qedhere
    \]
\end{proof}

Thus we obtain
\begin{cor}
    If we define
\begin{equation}
    \label{eq:lefgeneral}
    \TA_k\coloneqq\begin{cases}
        \ang*{\pi^*\A_k\otimes\O((l-1)E),\D(X_L)_{-l+1}}&\text{if $k<i$,}\\
        \D(X_L)_{-l+1}&\text{if $k\geq i$,}
    \end{cases}
\end{equation}
for~$0\leq k\leq l-2$, then we get a Lefschetz decomposition of~$\D(\X)$ with respect
to~$\pi^*\O_X(1)(-E)$.
\end{cor}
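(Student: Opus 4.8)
\emph{Proof proposal.}
The plan is to obtain the decomposition \eqref{eq:lefgeneral} from Orlov's blowup formula by first refining the pullback block and then interlacing the refined pieces with the exceptional ones using the mutations of Proposition~\ref{prop:lefgeneral}; the nesting of the $\TA_k$ will then be immediate. Orlov's theorem for the blowup $\pi\colon\X\to X$ of the codimension-$l$ smooth subvariety $X_L$ gives, as already recorded above, the semiorthogonal decomposition
\[
    \D(\X)=\ang*{\pi^*\D(X)\otimes\omega_\X,\D(X_L)_{-l+1},\dots,\D(X_L)_{-1}},
\]
independently of any Lefschetz structure on $\D(X)$. Since $\omega_\X\cong\pi^*\omega_X((l-1)E)$ and $\D(X)$ is stable under $\otimes\,\omega_X$, the first block is $\pi^*\D(X)\otimes\O((l-1)E)$; as $\pi^*$ is fully faithful, pulling back and twisting the Lefschetz decomposition \eqref{eq:stdlef2} refines this block, and we get
\[
    \D(\X)=\ang*{\pi^*\A_0\otimes\O((l-1)E),\dots,\pi^*(\A_{i-1}(i-1))\otimes\O((l-1)E),\D(X_L)_{-l+1},\dots,\D(X_L)_{-1}}.
\]

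Next I would interlace. Note that Proposition~\ref{prop:lefgeneral} says equivalently that right-mutating $\pi^*\F\otimes\O(kE)$ through the copy $\D(X_L)_{-k}=j_*p^*\D(X_L)\otimes\O(kE)$ returns $\pi^*\F\otimes\O((k-1)E)$; this is the form I would use. Starting from the leftmost exceptional block $\D(X_L)_{-l+1}=j_*p^*\D(X_L)\otimes\O((l-1)E)$, I sweep it to the left past $\pi^*(\A_{i-1}(i-1))\otimes\O((l-1)E),\dots,\pi^*(\A_1(1))\otimes\O((l-1)E)$ one block at a time; each transposition is the right mutation just described applied to the pullback block, lowering its $E$-twist by one. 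After the sweep $\D(X_L)_{-l+1}$ sits immediately after $\pi^*\A_0\otimes\O((l-1)E)$ — so the first two blocks are $\TA_0=\ang*{\pi^*\A_0\otimes\O((l-1)E),\D(X_L)_{-l+1}}$ — and the blocks it passed have become $\pi^*(\A_k(k))\otimes\O((l-2)E)$. Repeating with the next copy $\D(X_L)_{-l+2}=\D(X_L)_{-l+1}(1)$, now swept leftward only as far as the block $\pi^*(\A_1(1))\otimes\O((l-2)E)$, and so on, after $i-1$ sweeps the decomposition reads
\[
    \D(\X)=\ang*{\TA_0,\ang*{\pi^*(\A_1(1))\otimes\O((l-2)E),\D(X_L)_{-l+2}},\dots,\ang*{\pi^*(\A_{i-1}(i-1))\otimes\O((l-i)E),\D(X_L)_{-l+i}},\D(X_L)_{-l+i+1},\dots,\D(X_L)_{-1}}.
\]
Using $\D(X_L)_m\otimes\pi^*\O_X(1)(-E)=\D(X_L)_{m+1}$ (Proposition~\ref{prop:lefstupid}) together with $\O((l-1)E)\otimes(\pi^*\O_X(1)(-E))^k=\pi^*\O_X(k)\otimes\O((l-1-k)E)$, one identifies the $k$-th block above with $\TA_k(k)$ for $0\le k<i$ and the trailing blocks with $\TA_k(k)=\D(X_L)_{-l+1}(k)$ for $i\le k\le l-2$. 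The count works out exactly because the standing hypothesis $i\le l-1$ leaves precisely the $l-1$ copies of $\D(X_L)$ one needs.

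For the Lefschetz property it then remains to check the nesting, since mutations preserve semiorthogonal decompositions and hence the display above — and so, after untwisting, \eqref{eq:lefgeneral} — is a genuine semiorthogonal decomposition with admissible components; in particular $\ang*{\pi^*\A_k\otimes\O((l-1)E),\D(X_L)_{-l+1}}$ is admissible. The inclusions $\A_{i-1}\subset\dots\subset\A_0$ pull back to $\pi^*\A_{i-1}\subset\dots\subset\pi^*\A_0$, and adjoining the common block $\D(X_L)_{-l+1}$ gives $\TA_{i-1}\subset\dots\subset\TA_0$; since $\D(X_L)_{-l+1}=\TA_{l-2}=\dots=\TA_i\subset\TA_{i-1}$, we obtain $0\subset\TA_{l-2}\subset\dots\subset\TA_0\subset\D(\X)$, as required.

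The one place I expect friction is the bookkeeping: pinning down the direction of Proposition~\ref{prop:lefgeneral} (left versus right mutation) and the accompanying unit shift of the $E$-twist, and then checking that the twists of the exceptional blocks match exactly the right powers of $\pi^*\O_X(1)(-E)$, so that each block of the mutated decomposition is literally $\TA_k(k)$. Beyond Orlov's theorem and the mutation identity already in hand, there is no further geometric input.
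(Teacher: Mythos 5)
Your argument is correct and is exactly the interlacing the paper intends when it writes ``Thus we obtain'' after Proposition~\ref{prop:lefgeneral}: refine the pullback block of Orlov's decomposition by the Lefschetz pieces of $\D(X)$ and transpose the $\D(X_L)$ blocks into place, the twist bookkeeping matching $\TA_k(k)$ precisely. The only cosmetic difference is that you use the right-mutation (inverse) form of Proposition~\ref{prop:lefgeneral}, sweeping the exceptional blocks leftward, whereas the paper's convention (as in the proof of Theorem~\ref{thm:lefgeneral}) left-mutates the pullback blocks; these are equivalent.
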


Note that the Lefschetz decomposition~\eqref{eq:lefgeneral} indeed specialises
to~\eqref{eq:lefstupid} in the case of the stupid Lefschetz decomposition on~$\D(X)$.

\begin{thm}
    \label{thm:lefgeneral}
    Let~$\X\to\P(L^*)$ be endowed with the Lefschetz decomposition~\eqref{eq:lefgeneral}. Then its
    homological projective dual is~$Y_L\to\P(L)$.
\end{thm}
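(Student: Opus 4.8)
The plan is to reduce Theorem~\ref{thm:lefgeneral} to Proposition~\ref{thm:lefstupid} by mutations, exactly as Kuznetsov's main theorem is proven by comparing a given Lefschetz decomposition to the stupid one. Concretely, I would work with the universal hyperplane section $\TH\to\P(L)$ of $\X\to\P(L^*)$ and exploit the isomorphism $\TH\cong\Bl_{X_L\times\P(L)}\H_L$ established in the proof of Proposition~\ref{thm:lefstupid}. By that proposition we already know $\D(\TH)$ has a semiorthogonal decomposition with the "trivial" pieces $\D(X_L)_{-l+2}\boxtimes\D(\P(L)),\dots,\D(X_L)_{-1}\boxtimes\D(\P(L))$ split off, and the complementary piece is $\tpi^*\D(\H_L)\otimes\omega_{\TH}$. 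The first step is therefore to understand $\D(\H_L)$ itself: since $Y\to\P(V^*)$ is HP dual to $X$ with respect to \eqref{eq:stdlef2}, restricting the HPD semiorthogonal decomposition of $\D(\H)$ to the linear subsystem $\P(L)\subset\P(V^*)$ (as in Theorem~\ref{thm:hpdmain}, or rather its proof) gives
\[
    \D(\H_L)=\ang*{\Phi_{Y_L\to\H_L}(\D(Y_L)),\A_1(1)\boxtimes\D(\P(L)),\dots,\A_{i-1}(i-1)\boxtimes\D(\P(L))}.
\]
I would want to see this either as a direct consequence of base change applied to Definition~\ref{df:hpd}, or by citing the relevant compatibility of HPD with linear sections from \cite{kuznetsovhpd}.

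The second step is to pull this decomposition up to $\TH$ via $\tpi^*(\dash)\otimes\omega_{\TH}$, which is fully faithful on $\D(\H_L)$ (this is the content of Orlov's theorem for the blowup $\TH\to\H_L$), and splice it into the decomposition of $\D(\TH)$ coming from Proposition~\ref{thm:lefstupid}. This yields a semiorthogonal decomposition of $\D(\TH)$ with pieces: one copy of $\tpi^*\Phi(\D(Y_L))\otimes\omega_{\TH}$, the "geometric" pieces $\tpi^*(\A_k(k)\boxtimes\D(\P(L)))\otimes\omega_{\TH}$ for $1\leq k\leq i-1$, and the blowup pieces $\D(X_L\times\P(L))_{-l+2},\dots,\D(X_L\times\P(L))_{-1}$. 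Now the point is that this is \emph{not yet} of the shape required by Definition~\ref{df:hpd} relative to the Lefschetz decomposition \eqref{eq:lefgeneral}: the trivial part of $\D(\TH)$ with respect to \eqref{eq:lefgeneral} should consist of $\TA_k(k)\boxtimes\D(\P(L))$ for $1\leq k\leq l-2$, i.e.\ of interlaced copies of $\pi^*\A_k\otimes\O((l-1)E)$-type pieces and $\D(X_L)$-type pieces. So the third step is a mutation argument: I would use Proposition~\ref{prop:lefgeneral} (applied on $\TH$, i.e.\ to the blowup $\Bl_{X_L\times\P(L)}\H_L$, with $\F$ ranging over the restricted pieces $\A_k(k)\boxtimes\D(\P(L))$, or rather their images under $\tpi^*$) to move each geometric piece past the appropriate string of $\D(X_L\times\P(L))_m$ pieces, converting $\tpi^*(\A_k(k)\boxtimes\D(\P(L)))\otimes\omega_{\TH}$ — whatever twist $\omega_{\TH}$ contributes — into a piece of the form $\ti^*(\pi^*\A_k\otimes\O((l-1)E)$-twisted-by-$k) \boxtimes\D(\P(L))$, matching the first summand of $\TA_k$. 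The bookkeeping here is: $\omega_{\TH}$ restricted to the blowup introduces a power of $\O(\E)$ and a power of $\tpi^*\omega_{\H_L/\P(L)}$, and one has to check that after the mutations of Proposition~\ref{prop:lefgeneral} the exponent of $E$ lands exactly on $(l-1)$ (up to the twist by $k$) and that the $\H_L/\P(L)$-relative canonical bundle contributes the correct twist by $\O(k)$; this is exactly the computation already done, fibrewise, in the introduction and it should go through because $i\leq l-1$ guarantees there is enough room to interlace.

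The main obstacle is precisely this interlacing/mutation bookkeeping: one has to arrange the mutations of Proposition~\ref{prop:lefgeneral} so that the resulting decomposition of $\D(\TH)$ is genuinely semiorthogonal in the \emph{order} dictated by \eqref{eq:lefgeneral} — i.e.\ $\TA_1(1)\boxtimes\D(\P(L)),\dots,\TA_{l-2}(l-2)\boxtimes\D(\P(L))$ followed by the complement — and that the complement is exactly the Fourier--Mukai image of $\D(Y_L)$. I expect the cleanest way to organise this is to not mutate inside $\D(\TH)$ directly but to run the argument in parallel on $\X$ and $\TH$: Proposition~\ref{prop:lefgeneral} already tells us that the Lefschetz decomposition \eqref{eq:lefgeneral} on $\D(\X)$ is obtained from the "disinterlaced" decomposition $\ang*{\pi^*\D(X)\otimes\omega_\X,\D(X_L)_{-l+1},\dots}$ of Proposition~\ref{prop:lefstupid}, read through the general Lefschetz decomposition \eqref{eq:stdlef2} of $\D(X)$, by the explicit mutation $\L_{\D(X_L)_{-k}}(\pi^*\A_k\otimes\O((k-1)E))\cong\pi^*\A_k\otimes\O(kE)$. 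Restricting everything to $\TH$, the same mutations identify the trivial part of $\D(\TH)$ for \eqref{eq:lefgeneral} with the span of the $\D(X_L\times\P(L))_m$ together with the $\tpi^*(\A_k(k)\boxtimes\D(\P(L)))$-pieces, whose complement — by Proposition~\ref{thm:lefstupid} applied to $\D(Y)$ being HP dual to $\D(X)$ — is $\Phi_{Y_L\to\TH}(\D(Y_L))$ for a suitable kernel $\F$ built by composing the kernel of Proposition~\ref{thm:lefstupid} with the HPD kernel for the pair $X,Y$ restricted over $\P(L)$. Finally, the full faithfulness of this composite $\Phi_{Y_L\to\TH}$ follows from the full faithfulness of the HPD transform $\D(Y_L)\to\D(\H_L)$ (Theorem~\ref{thm:hpdmain}(2)) composed with the fully faithful blowup functor $\tpi^*(\dash)\otimes\omega_{\TH}\colon\D(\H_L)\to\D(\TH)$, so no new input is needed there.
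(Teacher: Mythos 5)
Your proposal is correct and follows essentially the same route as the paper: the isomorphism $\TH\cong\Bl_{X_L\times\P(L)}\H_L$, faithful base change to get the semiorthogonal decomposition of $\D(\H_L)$ from the HPD decomposition of $\D(\H)$, Orlov's theorem on $\TH$, interlacing via the mutations of Proposition~\ref{prop:lefgeneral} (which, as you note, amount to twisting by $\O(\E)$), and the observation that the resulting kernel $(\id_{Y_L}\times\tpi)^*\G_L$ twisted by a power of $\O(\E)$ remains supported on $Y_L\times_{\P(L)}\TH$. The bookkeeping you flag as the main obstacle is handled in the paper exactly as you anticipate, by choosing the form of Orlov's decomposition with the exceptional pieces on the left so that only the twists by $\O(\E)$ from Proposition~\ref{prop:lefgeneral} need to be tracked.
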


\begin{proof}
    Unless noted otherwise we use the same notation as in Proposition~\ref{thm:lefstupid}. Thus we
    have to find an object~$\F\in\D(Y_L\times_{\P(L)}\TH)$ such that its associated Fourier--Mukai
    transform~$\Phi_{Y_L\to\TH}^{\F}$ is fully faithful and we have a semiorthogonal decomposition
    \begin{equation}
        \label{eq:generalhyperplane}
        \D(\TH)=\ang[\Big]{\Phi_{Y_L\to\TH}^{\F}\left(\D(Y_L)\right),\TA_1(1)\boxtimes\D(\P(L)),\dots,\TA_{l-2}(l-2)\boxtimes\D(\P(L))},
    \end{equation}
    where now the~$\TA_k$ are defined as in~\eqref{eq:lefgeneral}. Since we assumed~$X\to\P(V)$ to
    be homologically projectively dual to~$Y\to\P(V^*)$, there is an
    object~$\G\in\D(Y\times_{\P(V^*)}\H)$ and a semiorthogonal decomposition
    \[
        \D(\H)=\ang*{\Phi_{Y\to\H}^{\G}\left(\D(Y)\right),\A_1(1)\boxtimes\D(\P(V^*)),\dots,\A_{i-1}(i-1)\boxtimes\D(\P(V^*))}.
    \]
    Since we assumed the dimensions of $X_L$ and~$Y_L$ to be as expected we can apply \emph{faithful
        base change}~\cite{kuznetsovhpd} to obtain an object~$\G_L\in\D(Y_L\times_{\P(L)}\H_L)$ and
    a semiorthogonal decomposition
    \[
        \D(\H_L)=\ang*{\Phi_{Y_L\to\H_L}^{\G_L}\left(\D(Y_L)\right),\A_1(1)\boxtimes\D(\P(L)),\dots,\A_{i-1}(i-1)\boxtimes\D(\P(L))}.
    \]
    It follows that we have the following semiorthogonal decomposition for $\TH$:
    \begin{multline}
        \label{eq:orlohyperplane}
        \D(\TH)=\ang[\Big]{\D(X_L\times\P(L))_{-l+2},\dots,\D(X_L\times\P(L))_{-1},\\
            \tilde{\pi}^*\left(\Phi_{Y_L\to\H_L}^{\G_L}(\D(Y_L))\right),\pi^*\A_1(1)\boxtimes\D(\P(L)),\dots,\pi^*\A_{i-1}(i-1)\boxtimes\D(\P(L))}.
    \end{multline}
    By Proposition~\ref{prop:lefgeneral}, left mutation through pieces of the form
    $\D(X_L\times\P(L))_{-l+k}$ just means tensoring by
    \[
        \O(\tilde{E})=i_{\TH}^*\O(E,0).
    \]
    It follows exactly as in Proposition~\ref{prop:lefgeneral} that one can mutate
    \eqref{eq:orlohyperplane} into \eqref{eq:generalhyperplane}. Finally, notice that $\D(Y_L)$ is
    embedded in $\D(\TH)$ via a Fourier--Mukai transform whose kernel is pushed forward from the
    fibre product $\TH\times_{\P(L)}Y_L$. Indeed
    \[
        \tilde{\pi}^*\left(\Phi_{Y_L\to\H_L}^{\G_L}(\dash)\right)\otimes\O((l-2)\tilde{E})
    \]
    is given by $\Phi_{Y_L\to\TH}^{\mathcal E_L}$ where $\mathcal E_L\in \D(Y_L\times \TH)$ is
    \[
        \mathcal{E}_L=(\id_{Y_L}\times\tilde{\pi})^*\G_L\otimes p_{\TH}^*\O((l-2)\tilde{E}).
    \]
    As $\tilde{\pi}$ is $\P(L)$-linear and by hypothesis $\G_L$ is pushed forward from the fiber
    product, one sees that $\mathcal E_L$ is as well pushed forward from $Y_L\times_{\P(L)}\TH$.
\end{proof}

\subsection{Base locus with multiplicity}
Recall that there is a purely categorical notion of HP dual. From that point of view then we can
define the categorical analogue $\C_L$ of~$Y_L$ to just be the right orthogonal of the trivial part
of~$\H_L$:
\[
    \C_L\coloneqq\ang*{\A_1(1)\boxtimes\D(\P(L)),\dots,\A_{i-1}(i-1)\boxtimes\D(\P(L))}^{\perp}.
\]
Then the proof of Theorem~\ref{thm:lefgeneral} in particular also shows that the categorical HP dual
$\TC$ of~$\Bl_{X_L}X\to\P(L^*)$ is just~$\C_L$. Note that we are not assuming the existence of a
geometric HP dual~$Y$ here. Thus, considering that a categorical HP dual always exists, this story
works any time~$X_L$ is smooth and it has the expected codimension. In fact, if we are happy with
the purely categorical result we can additionally drop the assumptions on smoothness and correct
dimension of~$X_L$. In this case the category~$\C_L$ won't be "smooth" and so the correct HP dual
will turn out to be a categorical resolution of it.

Let~$Z\subset X$ be a smooth subvariety and consider the sublinear system~$L$ of all sections
of~$\O_X(1)$ vanishing with order at least $m\geq 1$ along~$Z$:
\[
    L=H^0(\I_Z^{\otimes m}(1))\subset H^0(\O_X(1)).
\]
If we consider the blowup~$\Bl_ZX$ and let the notation be as in the diagram~\eqref{eq:diag0}, then
we can write~$L$ as
\[
    L=H^0(\pi^*\O_X(1)(-mE)).
\]
Note that $Z$ is just~$X_L$ with the reduced scheme structure and thus we again obtain a regular
map~$\Bl_ZX\to\P(L^*)$. Also note that for~$m>1$ the restricted universal hyperplane section~$\H_L$
is not smooth and thus we are not in the hypothesis of Orlov's theorem anymore. However, if we
assume that the singularities of~$\H_L$ are nice enough we still have a HPD story:

\begin{prop}
    \label{prop:lefmult}
    Let~$Z\subset X$ be a smooth subvariety of codimension $c$ and $m\geq 1$; set
    $a\coloneqq\lceil\frac{c-1}{m}\rceil$ and assume~$i\leq a$. Then there is a Lefschetz
    decomposition of~$\D(\Bl_ZX)$ of the form
    \[
        \D(\Bl_ZX)=\ang*{\TA_0,\TA_1(1),\dots,\TA_{a-1}(a-1)},
    \]
    with respect to the line bundle~$\pi^*\O_X(1)(-mE)$, where the~$\TA_k$ are defined as
    \[
        \TA_k\coloneqq\begin{cases}
            \ang*{\pi^*\A_k\otimes\O((c-1)E),\D(Z)_{-c+1},\dots,\D(Z)_{-c+m}}&\text{if $0\leq k\leq
                i-1$,}\\
            \ang*{\D(Z)_{-c+1},\dots,\D(Z)_{-c+m}}&\text{if $i\leq k\leq a-2$,}\\
            \ang*{\D(Z)_{-c+1},\dots,\D(Z)_{-c+((c-1)\bmod m)}}&\text{if $k=a-1$.}
        \end{cases}
    \]
\end{prop}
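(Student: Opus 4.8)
The plan is to run Orlov's blowup formula for $\Bl_Z X \to X$, using the exceptional divisor $E$ (which is a $\P^{c-1}$-bundle over $Z$), and then reorganise the resulting semiorthogonal decomposition into a Lefschetz decomposition for the line bundle $\H_{\X}(1) \coloneqq \pi^*\O_X(1)(-mE)$. First I would record Orlov's decomposition
\[
    \D(\Bl_Z X) = \ang*{\pi^*\D(X)\otimes\O((c-1)E),\D(Z)_{-c+1},\dots,\D(Z)_{-1}},
\]
where $\D(Z)_{-k} \coloneqq j_*p^*\D(Z)\otimes\O(-kE)$ and I have twisted the copy of $\pi^*\D(X)$ by $\O((c-1)E)$ for later convenience (this is harmless since $\pi^*\D(X)\otimes\O((c-1)E)$ is still admissible and generated by an exceptional-type collection whenever $\D(X)$ has one; more precisely it is equivalent to $\D(X)$ via a fully faithful functor). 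Next I would substitute the Lefschetz decomposition $\D(X) = \ang{\A_0,\A_1(1),\dots,\A_{i-1}(i-1)}$ into the first piece. The key point is that the twist by $\H_{\X}(1)$ acts on the blowup-exceptional pieces by $\D(Z)_{-k}\otimes\H_{\X}(1) = \D(Z)_{-k+m}$ — this is the multiple-$E$ analogue of the computation in Proposition~\ref{prop:lefstupid}, using $j^*\O(-E)\cong\O_E(1)$ and $j^*\pi^*\O_X(1)\cong p^*\O_Z(1)$, together with the projection formula — and on the $\pi^*\A_k$-pieces by $\pi^*(\A_k)\otimes\O((c-1)E)\otimes\H_{\X}(1) = \pi^*(\A_k(1))\otimes\O((c-1-m)E)$.

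The second step is the combinatorial bookkeeping: I want to show that successively twisting the proposed $\TA_k$ by powers of $\H_{\X}(1)$ and reading off the pieces reproduces exactly the $c-1$ copies $\D(Z)_{-c+1},\dots,\D(Z)_{-1}$ together with the $i$ copies $\pi^*\A_k\otimes\O((c-1)E)$ of the Lefschetz pieces of $X$, each appearing in the correct twist. Here the arithmetic is: $c-1 = (a-1)m + r$ where $r = (c-1)\bmod m$ and $a = \lceil (c-1)/m\rceil$, so the $a$ blocks $\TA_0,\dots,\TA_{a-1}$ contain $m,m,\dots,m,r$ copies of the $\D(Z)_{\bullet}$'s respectively, summing to $c-1$ as required; and the $i \le a$ blocks $\TA_0,\dots,\TA_{i-1}$ each additionally carry one $\pi^*\A_k$-piece, so that across all twists $\TA_0(0),\dots,\TA_{a-1}(a-1)$ the $\pi^*\A_k$-pieces appear precisely once each and in twists matching $\A_0,\A_1(1),\dots,\A_{i-1}(i-1)$. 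I would make this precise by exhibiting the explicit order in which the pieces sit inside $\D(\Bl_Z X)$ after all twists are expanded and checking it agrees, up to reordering, with Orlov's decomposition above.

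The third step — and the main obstacle — is establishing the required semiorthogonality and mutation isomorphisms so that the reordering is legitimate. Orlov's decomposition comes with a fixed semiorthogonal order, and to interlace the $\D(Z)_{\bullet}$-pieces with the $\pi^*\A_k$-pieces I must perform a sequence of left mutations exactly as in the proof of Theorem~\ref{thm:lefgeneral}, invoking Proposition~\ref{prop:lefgeneral} repeatedly: each left mutation of a $\D(Z)_{-k}$-piece past a $\pi^*\F\otimes\O((k-1)E)$-piece replaces the latter by $\pi^*\F\otimes\O(kE)$, and iterating this $m$ times (once per copy of $\D(Z)$ inside a block) moves a Lefschetz piece from the twist $\O((c-1)E)$ down to the twist $\O((c-1-m)E) = \O((c-1)E)\otimes\H_{\X}(1)^{\otimes(-1)}\cdot(\text{correction})$, i.e.\ precisely realises the twist by $\H_{\X}(1)$ on that piece. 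I would need to check that after all these mutations the $\pi^*\A_k$-pieces land in the claimed subcategories $\TA_k$, that the nesting $\TA_{a-1}\subset\dots\subset\TA_0$ holds (this reduces to $\A_{i-1}\subset\dots\subset\A_0$ together with the tautological $\D(Z)_{-c+1},\dots,\D(Z)_{-c+((c-1)\bmod m)} \subset \D(Z)_{-c+1},\dots,\D(Z)_{-c+m}$), and that each twist $\TA_k(k)$ genuinely equals the relevant block of the mutated decomposition. The semiorthogonality between the various $\D(Z)_{-k}$-pieces and the mutated $\pi^*\A_k$-pieces is inherited from Orlov's theorem after the mutations, since mutation is an equivalence; the only real care is tracking twists through the $m$-fold iteration, which is where $m > 1$ makes the indexing genuinely more delicate than in Theorem~\ref{thm:lefgeneral}. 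I expect the cleanest writeup phrases the whole argument as a single induction on the number of $\D(Z)$-pieces moved, citing Proposition~\ref{prop:lefgeneral} at each step, rather than juggling all mutations simultaneously.
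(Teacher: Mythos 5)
Your proposal is correct and takes exactly the paper's route: the paper's entire proof of Proposition~\ref{prop:lefmult} is the single sentence that it is an immediate consequence of Proposition~\ref{prop:lefgeneral}, i.e.\ precisely the combination of Orlov's decomposition, the interlacing mutations, and the bookkeeping $c-1=(a-1)m+((c-1)\bmod m)$ that you spell out (your version is in fact more detailed than the paper's). One small directional remark: to move a Lefschetz piece from the twist $\O((c-1)E)$ down to $\O((c-1-km)E)$ you transpose it rightward past $\D(Z)_{-c+1},\dots,\D(Z)_{-c+km}$ using the right mutations $\R_{\D(Z)_{-s}}(\pi^*\F\otimes\O(sE))\cong\pi^*\F\otimes\O((s-1)E)$, which are the inverses of the left mutations of Proposition~\ref{prop:lefgeneral}, rather than by iterating the left mutation itself (which raises the twist by $E$ at each step).
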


\begin{proof}
    This is an immediate consequence Proposition~\ref{prop:lefgeneral}.
\end{proof}

\begin{thm}
    \label{thm:lefmult}
    Let~$\Bl_ZX\to\P(L^*)$ be as above and equip it with the Lefschetz decomposition of
    Proposition~\ref{prop:lefmult}. Assume furthermore that~$\H_L$ has only rational singularities.
    Then the categorical HP dual~$\TC$ is a categorical resolution of singularities of~$\C_L$.
\end{thm}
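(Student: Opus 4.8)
The strategy is to run the proof of Theorem~\ref{thm:lefgeneral} essentially verbatim, replacing every appeal to Orlov's theorem for a smooth blowup by Kuznetsov's theory of categorical resolutions of singularities. The starting point is still the geometric identification $\TH\cong\Bl_{Z\times\P(L)}\H_L$ coming from the fibrewise description of hyperplane sections, exactly as in Proposition~\ref{thm:lefstupid}: a hyperplane section of $\phi$ is the strict transform of a section of $f$ vanishing to order $m$ along $Z$. Since $Z\times\P(L)$ is smooth but $\H_L$ is not, the blowup $\TH\to\H_L$ is no longer Orlov's situation; however, because $\H_L$ has rational singularities by hypothesis and $\TH$ is smooth (it is a hyperplane section of the smooth $\Bl_ZX$, and smooth near the exceptional locus since every element of $L$ is smooth away from $Z$ — note $\TH$ need not be the blowup of $\H_L$ in a smooth centre in the naive sense, so one must check smoothness directly from the $\Bl_ZX$ side), the pushforward $\tpi_*\colon\D(\TH)\to\D(\H_L)$ together with $\tpi^*$ exhibits $\D(\TH)$ as a \emph{categorical resolution} of $\D(\H_L)$ in the sense of~\cite{kuznetsovsingularities}: $\tpi^*$ is fully faithful and $\tpi_*\tpi^*\cong\id$ because $\R\tpi_*\O_{\TH}\cong\O_{\H_L}$ (rational singularities), and the relevant projection-formula identities hold.

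First I would make precise the semiorthogonal decomposition of $\D(\TH)$ that plays the role of the Orlov decomposition~\eqref{eq:orlohyperplane}. For a resolution $\tpi$ with $\tpi^*$ fully faithful one still gets a semiorthogonal decomposition $\D(\TH)=\ang{\tpi^*\D(\H_L),\K}$ with $\K$ the kernel of $\tpi_*$; when the centre is smooth of codimension $c$ (here the centre is $Z\times\P(L)$, of codimension $c$ in $\H_L$) and the exceptional divisor is a projective bundle of the expected rank, $\K$ decomposes further into copies of $\D(Z\times\P(L))$ twisted by powers of $\O(\E)$ — this is the content of Kuznetsov's construction of categorical resolutions of cones/blowups, and it produces exactly $c-1$ copies (not $c-1$ only because the naive count; the multiplicity $m$ enters through how the line bundle $\pi^*\O_X(1)(-mE)$ groups these copies into the $\TA_k$, which is why the definition of the pieces in Proposition~\ref{prop:lefmult} has the staircase with step $m$). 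So I would write
\[
    \D(\TH)=\ang[\big]{\D(Z\times\P(L))_{-c+2},\dots,\D(Z\times\P(L))_{-1},\ \tpi^*\D(\H_L)}
\]
and then substitute the HPD-for-$X$ decomposition of $\D(\H_L)$ — obtained by faithful base change from $\D(\H)$, using that $\H_L$ has the expected dimension since $Z$ does — into the last slot, getting the analogue of~\eqref{eq:orlohyperplane} with $\C_L=\ang{\A_1(1)\boxtimes\D(\P(L)),\dots,\A_{i-1}(i-1)\boxtimes\D(\P(L))}^\perp$ sitting inside $\tpi^*\D(\H_L)$.

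The remaining work is the mutation bookkeeping. Proposition~\ref{prop:lefgeneral} was proved for an honest smooth blowup, but its proof only uses that $j$ is a divisor, that $p_*\O_E\cong\O_Z$ and the short exact sequence $0\to\O((k-1)E)\to\O(kE)\to j_*j^*\O(kE)\to0$ — all of which survive for the resolution $\TH\to\H_L$ with exceptional divisor $\E$ over $Z\times\P(L)$ (this is again part of Kuznetsov's setup). So left mutation of the pieces $\D(Z\times\P(L))_{\bullet}$ past each other and past $\pi^*\A_k$ is still just tensoring by $\O(\E)=i_{\TH}^*\O(E,0)$, and interlacing~\eqref{eq:orlohyperplane} into the target decomposition with pieces $\TA_k(k)\boxtimes\D(\P(L))$ goes through exactly as before; the grouping dictated by the line bundle $\pi^*\O_X(1)(-mE)$ is precisely what the case distinction in Proposition~\ref{prop:lefmult} encodes. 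One then checks, as in Theorem~\ref{thm:lefgeneral}, that $\TC$ — the right orthogonal of the trivial part — is the image of $\D(Y_L)$, i.e.\ of the categorical resolution of $\C_L$, under a Fourier–Mukai functor with kernel pushed forward from $\TH\times_{\P(L)}(\text{resolution of }\C_L)$, using $\P(L)$-linearity of $\tpi$. Finally I would verify that the composite functor $\TC\hookrightarrow\D(\TH)\to\D(\H_L)$ restricts to a categorical resolution of $\C_L$: fully faithfulness of the right adjoint's section and the identity $\tpi_*\tpi^*\cong\id$ restrict to the subcategories because the mutations are equivalences and the trivial parts match up on both sides.

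\textbf{The main obstacle.} The genuinely delicate point is the very first one: producing the semiorthogonal decomposition of $\D(\TH)$ that replaces Orlov's formula when the blown-up centre sits inside a singular variety. One cannot quote Orlov directly; instead one must invoke Kuznetsov's construction of a categorical resolution via iterated blowups/cones and verify that in our situation — $\H_L$ with rational singularities, $\TH$ smooth, exceptional divisor a projective bundle of the expected dimension over the smooth centre $Z\times\P(L)$ — that construction produces a decomposition with the expected number of $\D(Z\times\P(L))$-pieces and with $\tpi^*\D(\H_L)$ (not merely an abstract resolution) as the complementary block. Checking that $\TH$ is smooth and that the count of exceptional pieces is exactly $c-1$ (equivalently, that $\E\to Z\times\P(L)$ is a $\P^{c-2}$-bundle, which uses that $L$ cuts out $Z$ with the right infinitesimal behaviour) is where the hypotheses $\codim Z=c$, multiplicity $m$, and $a=\lceil(c-1)/m\rceil$ all have to be reconciled; everything after that is the same mutation calculus as in the smooth case.
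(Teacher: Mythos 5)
Your plan founders at precisely the step you flag as the delicate one, and the fix you propose for it does not work. The semiorthogonal decomposition
\[
    \D(\TH)=\ang[\big]{\D(Z\times\P(L))_{-c+2},\dots,\D(Z\times\P(L))_{-1},\ \tpi^*\D(\H_L)}
\]
does not exist when $\H_L$ is singular, and it is not what Kuznetsov's construction of categorical resolutions produces. For a resolution of a singular variety the pullback $\tpi^*$ has bounded image only on perfect complexes, the image of the perfect complexes under $\tpi^*$ is in general not an admissible subcategory of $\D(\TH)$, and Kuznetsov's output is a subcategory $\tilde{\D}\subset\D(\TH)$ that strictly contains the pullbacks of perfect complexes and is cut out by a condition on restrictions to the exceptional divisor --- it is not a complementary block of an Orlov-type decomposition into which one can ``substitute'' the decomposition of $\D(\H_L)$. (Already for the blowup of the vertex of a quadric cone the complementary block is the categorical resolution, not the pullback of the perfect derived category.) Faithful base change to produce the decomposition of $\D(\H_L)$ is likewise unavailable here; one must work with $\C_L$ defined directly as a right orthogonal inside $\D(\H_L)$. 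Since everything downstream in your argument is built on this decomposition, the proposal has a gap at its foundation.

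The paper's proof sidesteps all of this by never decomposing $\D(\TH)$ relative to $\H_L$. It starts from the decomposition available on the smooth side --- $\TH$ is the universal hyperplane section of the smooth $\Bl_ZX$, so $\D(\TH)=\ang{\TC,\TA_1(1)\boxtimes\D(\P(L)),\dots,\TA_{a-1}(a-1)\boxtimes\D(\P(L))}$ holds by the definition of the categorical dual --- mutates it with Proposition~\ref{prop:lefgeneral} into a form in which $\TC'\cong\TC$ sits between the $\D(Z)_k\boxtimes\D(\P(L))$ pieces and the $\pi^*\A_k(k)\boxtimes\D(\P(L))$ pieces, and then verifies the categorical-resolution axioms directly for the restricted adjoint pair $\sigma_*=\res{\tpi_*}{\TC'}$ and $\sigma^*=\res{\tpi^*}{\C_L^{\perf}}$. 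Rational singularities enter only to give $\sigma_*\sigma^*\cong\id$ via the projection formula; the remaining content consists of two adjunction computations showing that $\tpi_*$ carries $\TC'$ into $\C_L$ and that $\tpi^*$ carries $\C_L^{\perf}$ into $\TC'$. Your closing sentence (``the trivial parts match up'') gestures at these codomain checks, but they are the actual proof, and they require no resolution-side decomposition of $\D(\TH)$ at all. You should replace your first step by this direct verification rather than trying to repair the nonexistent Orlov decomposition.
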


\begin{proof}
    Recall that we have the two semiorthogonal decompositions
    \begin{align}
        \label{eq:lefmult}
        \D(\TH)&=\ang*{\TC,\TA_1(1)\boxtimes\D(\P(L)),\dots,\TA_{a-1}(a-1)\boxtimes\D(\P(L))}\\
        \D(\H_L)&=\ang*{\C_L,\A_1(1)\boxtimes\D(\P(L)),\dots,\A_{i-1}(i-1)\boxtimes\D(\P(L))}.
    \end{align}
    We apply Proposition~\ref{prop:lefgeneral} to mutate~\eqref{eq:lefmult} into the decomposition
    \begin{multline*}
        \D(\TH)=\ang{\D(Z)_{-c+m+1}\boxtimes\D(\P(L)),\dots,\D(Z)_{-1}\boxtimes\D(\P(L)),\TC'\\
            \pi^*\A_1(1)\boxtimes\D(\P(L)),\dots,\pi^*\A_{i-1}(i-1)\boxtimes\D(\P(L))},
    \end{multline*}
    where~$\TC'\cong\TC$. We now show that~$\TC'$ is a categorical resolution of
    singularities~\cite{kuznetsovsingularities} of~$\C_L$, i.e.\ there exists a pair of functors
    \[
        \sigma_*\colon\TC'\to\C_L,\quad\sigma^*\colon\C_L^{\perf}\to\TC',
    \]
    such that~$\sigma^*$ is left adjoint to~$\sigma_*$ and the
    unit~$\id_{\C_L^{\perf}}\to\sigma_*\sigma^*$ is an isomorphism. On the level of the ambient
    categories we have such a pair given by $\tpi_*$ and~$\tpi^*$. We claim that the restrictions
    \[
        \sigma_*\coloneqq\res{\tpi_*}{\TC'},\quad\sigma^*\coloneqq\res{\tpi^*}{\C_L^{\perf}},
    \]
    do the job. Indeed, since we assumed that~$\H_L$ has only rational singularities we
    have~$\sigma_*\sigma^*\cong\id_{\C_L^{\perf}}$, and thus we only need to show that $\sigma_*$
    and~$\sigma^*$ have the right codomains. For~$\sigma_*$ this follows from the computation
    \[
        \Hom(i_{\H_L}^*(\A_k(k)\boxtimes\D(\P(L))),\tpi_*\TC')\cong\Hom(i_{\TH}^*(\pi^*\A_k(k)\boxtimes\D(\P(L))),\TC')=0.
    \]
    For~$\sigma^*$ it follows from
    \[
        \Hom(i_{\TH}^*(\pi^*\A_k(k)\boxtimes\D(\P(L))),\tpi^*\C_L^{\perf})\cong\Hom(i_{\H_L}^*(\A_k(k)\boxtimes\D(\P(L))),\C_L^{\perf})=0,
    \]
    and
    \begin{align*}
        \Hom(\tpi^*\C_L^{\perf},i_{\TH}^*(\D(Z)_k\boxtimes\D(\P(L))))&\cong\Hom(\tpi^*\C_L^{\perf},\tj_*\tp^*\D(Z\times\P(L))\otimes\O(-k\TE))\\
        &\cong\Hom(\tp^*\ti^*\C_L^{\perf},\tp^*\D(Z\times\P(L))\otimes\O_{\TE}(k))\\
        &\cong\Hom(\ti^*\C_L^{\perf},\D(Z\times\P(L))\otimes\tp_*\O_{\TE}(k))\\
        &=0,
    \end{align*}
    for~$-c+1\leq k\leq -1$.
\end{proof}

\section{Two examples}
\label{sec:examples}
A natural question to ask at this point is whether in the case of base locus with multiplicity we can
say something more about the categorical HP dual~$\TC$ instead of just that it is a categorical
resolution of singularities of~$\C_L$. For example: could~$\TC$ be geometric? We will present two
examples showing that this is sometimes the case, depending on the choice of linear system~$L$. More
precisely the starting point will be the degree~$3$ Veronese
embedding~$\P^5\hookrightarrow\P(H^0(\O_{\P^5}(3))^*)$ equipped with the standard Lefschetz
decomposition of~$\P^5$ with respect to~$\O_{\P^5}(3)$. The HP dual in this case is a noncommutative
K3-fibration. Both examples are essentially applications of Kuznetsov's results on cubic
fourfolds~\cite{kuznetsovcubic}.

\subsection{First example}
We start with an example where the above process yields geometric HP duality. We will take~$L\subset
H^0(\O_{\P^5}(3))$ to be the linear system of all cubic fourfolds that are singular at a fixed
point~$P\in\P^5$. In particular then on~$\Bl_P\P^5$ we have
\[
    L\cong H^0(\pi^*\O_{\P^5}(3)(-2E)),
\]
and it is base point free. Finally by Proposition~\ref{prop:lefmult} we have a rectangular Lefschetz
decomposition of~$\Bl_P\P^5$ with respect to~$\pi^*\O_{\P^5}(3)(-2E)$ given by
\begin{equation}
    \label{eq:lef1stexample}
    \D(\Bl_P\P^5)=\ang*{\TA,\TA(1)},
\end{equation}
where
\[
    \TA=\ang{\pi^*\O_{\P^5}(-6)(4E),\pi^*\O_{\P^5}(-5)(4E),\pi^*\O_{\P^5}(-4)(4E),\D(P)_{-4},\D(P)_{-3}}.
\]

\begin{prop}
    The HP dual of~$\Bl_P\P^5\to\P(L^*)$ with respect to the above Lefschetz decomposition is a
    complete intersection~$\CX$ of a universal $(2,1)$ and a universal~$(3,1)$ divisor
    in~$\P^4\times\P(L)$. In particular~$\CX\to\P(L)$ is generically a K3-fibration.
\end{prop}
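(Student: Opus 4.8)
The plan is to realise the universal hyperplane section $\TH$ of $\Bl_P\P^5\to\P(L^*)$ concretely as a blowup of $\P^4\times\P(L)$ along $\CX$, apply Orlov's blowup theorem~\cite{orlov}, and then check Definition~\ref{df:hpd} for $\CX$ by mutating the resulting semiorthogonal decomposition into the shape prescribed by~\eqref{eq:lef1stexample}. Fix homogeneous coordinates $z_0,\dots,z_5$ on $\P^5$ with $P=[0:\dots:0:1]$ and write $x=(z_0,\dots,z_4)$. A cubic form singular at $P$ is exactly one of the form $z_5L_2(x)+L_3(x)$ with $L_2\in H^0(\O_{\P^4}(2))$ and $L_3\in H^0(\O_{\P^4}(3))$, so $L\cong H^0(\O_{\P^4}(2))\oplus H^0(\O_{\P^4}(3))$ and $\P(L)$ is the space of such pairs $[L_2:L_3]$. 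I identify $\Bl_P\P^5$ with the projective bundle $\P_{\P^4}(\O_{\P^4}(2)\oplus\O_{\P^4}(3))$, in which the bundle projection $q$ is the linear projection from $P$ and the relative $\O(1)$ is $\pi^*\O_{\P^5}(3)(-2E)$; then $q_*(\pi^*\O_{\P^5}(3)(-2E))=\O_{\P^4}(2)\oplus\O_{\P^4}(3)$, which recovers $L$ as its space of sections.

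By the argument in the proof of Theorem~\ref{thm:lefmult}, $\TH=\Bl_{\{P\}\times\P(L)}\H_L$; I would describe this concretely as the strict transform of $z_5L_2(x)+L_3(x)=0$ inside $\P_{\P^4}(\O_{\P^4}(2)\oplus\O_{\P^4}(3))\times\P(L)$, a divisor of relative degree one over $\P^4\times\P(L)$. The locus over which this divisor contains a whole fibre of the $\P^1$-bundle is precisely $\{L_2(x)=0\}\cap\{L_3(x)=0\}=\CX$ --- the intersection of the universal $(2,1)$ divisor and the universal $(3,1)$ divisor --- so the other projection $\psi\colon\TH\to\P^4\times\P(L)$ exhibits $\TH$ as the blowup of $\P^4\times\P(L)$ along $\CX$. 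Since the two equations of $\CX$ involve $L_2$ and $L_3$ independently, the partial Jacobian in the $\P(L)$-directions already has maximal rank at every point of $\CX$; hence $\CX$ is smooth of codimension $2$ in $\P^4\times\P(L)$, $\TH$ is smooth, and the blowup description is valid.

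As $\codim_{\P^4\times\P(L)}\CX=2$, Orlov's theorem gives
\[
\D(\TH)=\ang*{\psi^*\D(\P^4\times\P(L)),\D(\CX)_{-1}},\qquad
\D(\P^4\times\P(L))=\ang*{\O_{\P^4}\boxtimes\D(\P(L)),\dots,\O_{\P^4}(4)\boxtimes\D(\P(L))},
\]
the copy of $\D(\CX)$ being embedded by a $\P(L)$-linear Fourier--Mukai functor whose kernel is supported on $\CX\times_{\P(L)}\TH$. On the other hand, the Lefschetz decomposition~\eqref{eq:lef1stexample} is rectangular with $a=2$ blocks, so (see the proof of Theorem~\ref{thm:lefmult}) $\D(\TH)$ also carries a decomposition $\D(\TH)=\ang*{\TC,\TA(1)\boxtimes\D(\P(L))}$ in which the trivial part $\TA(1)\boxtimes\D(\P(L))$ is explicit: the three line-bundle blocks $\pi^*\O_{\P^5}(-j)(2E)\boxtimes\D(\P(L))$, $j=1,2,3$, restricted to $\TH$, together with the two blocks $\D(P)_{-1}\boxtimes\D(\P(L))$ and $\D(P)_{-2}\boxtimes\D(\P(L))$, which are supported on $E\cap\TH$. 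I would then verify Definition~\ref{df:hpd} for $Y=\CX$ directly: after the twist needed to move the copy $\D(\CX)_{-1}$ to the front of the Orlov decomposition, the image of $\D(\CX)$ is still fully faithful and $\P(L)$-linear with kernel on the fibre product, is left orthogonal to $\TA(1)\boxtimes\D(\P(L))$, and together with it generates $\D(\TH)$. The last two points follow from a sequence of standard mutations --- tensoring by the classes of the two exceptional divisors of $\TH$ as in Proposition~\ref{prop:lefgeneral}, combined with the ideal-sheaf sequences of the divisors $E\cap\TH$ and of the $\psi$-exceptional divisor --- relating the five generators $\psi^*(\O_{\P^4}(j))\boxtimes\D(\P(L))$ to the five generators listed above. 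This identifies $\CX$, via $\psi$, as the homological projective dual of $\Bl_P\P^5\to\P(L^*)$.

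Finally, for generic $[L_2:L_3]\in\P(L)$ the fibre of $\CX\to\P(L)$ is the smooth complete intersection $V(L_2)\cap V(L_3)\subset\P^4$ of type $(2,3)$; by adjunction its canonical bundle is $\O(2+3-5)=\O$, so it is a K3 surface and $\CX\to\P(L)$ is generically a K3-fibration. The main obstacle is the mutation step above: one must match the two five-block semiorthogonal decompositions of $\D(\TH)$ while keeping exact track of the twists by the two exceptional divisors and of the fact that the $\D(P)$-blocks live on $E\cap\TH$, not on $\CX$, so as to be certain that the dual comes out as $\D(\CX)$ itself and not as a twist of it or a categorical resolution of some other category. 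Everything else --- the isomorphism $\TH\cong\Bl_\CX(\P^4\times\P(L))$ and the smoothness of $\CX$ --- is routine; conceptually the whole computation is the relative-over-$\P(L)$ version of Kuznetsov's description of the K3 category of a one-nodal cubic fourfold via projection from the node~\cite{kuznetsovcubic}.
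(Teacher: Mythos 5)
Your proposal is correct and is essentially the paper's argument: identify $\Bl_P\P^5$ as a $\P^1$-bundle over $\P^4$ via projection from $P$, realise $\TH$ as $\Bl_{\CX}(\P^4\times\P(L))$ (the paper's isomorphism~\eqref{eq:pointiso}), apply Orlov's theorem, and mutate the resulting decomposition into the one prescribed by~\eqref{eq:lef1stexample}. The only caveat is that the mutation chain you defer as ``standard'' is where the paper does all its work (Steps~1--6, starting from the computation of $\O(\CE)\cong i_{\TH}^*(\pi^*\O_{\P^5}(2)(-3E)\boxtimes\O_{\P(L)}(1))$), and in particular the claim that the mutated embedding of $\D(\CX)$ still has Fourier--Mukai kernel supported on $\CX\times_{\P(L)}\TH$ is not automatic --- the paper proves it via the explicit relative-convolution formulas of Lemmas~\ref{lem:mukai} and~\ref{lem:fibreproductkernel}, and you should not assert it without some such argument.
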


To prove this proposition we follow closely Calabrese and Thomas~\cite{calabresethomas}.  Note first
that linear projection away from~$P$ defines a rational map~$\P^5\dashrightarrow\P^4$ with
indeterminacy locus~$P$ and thus induces a regular map~$\phi\colon\Bl_P\P^5\to\P^4$ which
exhibits~$\Bl_P\P^5$ as a $\P^1$-bundle over~$\P^4$ given by
\[
    \phi\colon\P(\O_{\P^4}(1)\oplus\O_{\P^4})\to\P^4.
\]
This carries a tautological line bundle which we denote by~$\O_{\phi}(-1)$. From this point of view
the exceptional divisor~$E$ is cut out by a section of~$\O_{\phi}(1)$ which is the image of the
section~$(0,1)\in H^0(\O_{\P^4}(-1)\oplus\O_{\P^4})$ under the tautological isomorphism
\[
    H^0(\O_{\phi}(1))\cong H^0(\O_{\P^4}(-1)\oplus\O_{\P^4}),
\]
and thus we obtain
\[
    \O_{\phi}(1)\cong\O(E).
\]
Using the fact that~$\phi^*\O_{\P^4}(1)\cong\pi^*\O_{\P^5}(1)(-E)$ and the projection formula we can
now compute
\[
    \phi_*\pi^*\O_{\P^5}(3)(-2E)\cong\O_{\P^4}(2)\oplus\O_{\P^4}(3).
\]
In particular we have
\[
    H^0(\pi^*\O_{\P^5}(3)(-2E)\boxtimes\O_{\P(L)}(1))\cong H^0(\O_{\P^4\times\P(L)}(2,1))\oplus
    H^0(\O_{\P^4\times\P(L)}(3,1)),
\]
and thus the tautological section which cuts out the universal hyperplane
section~$\TH\subset\Bl_P\P^5\times\P(L)$ induces a section cutting out~$\CX\subset\P^4\times\P(L)$.
An argument by Calabrese and Thomas~\cite[Lemma~4.3]{calabresethomas} shows that we have an
isomorphism
\begin{equation}
    \label{eq:pointiso}
    \TH\cong\Bl_{\CX}\left(\P^4\times\P(L)\right),
\end{equation}
where the projection~$\cpi\colon\TH\to\P^4\times\P(L)$ is given by
\[
    \cpi=(\phi\times\id_{\P(L)})\circ i_{\TH}.
\]
We will denote the exceptional divisor of this blowup by~$\CE$. Recall from the previous sections
that we also have an isomorphism
\[
    \TH\cong\Bl_{P\times\P(L)}\H_L.
\]
Looking at the defining equations one can compute explicitly that the exceptional divisor~$\E$ of
this blowup gets mapped to the~$(2,1)$ divisor containing~$\CX$ under the composition of the
isomorphism~\eqref{eq:pointiso} and the projection~$\cpi$. In fact~$\E$ is the proper transform of
this divisor and so we compute
\begin{align*}
    \O(\E)&\cong\cpi^*\O_{\P^4\times\P(L)}(2,1)(-\CE)\\
    &\cong i_{\TH}^*\left(\pi^*\O_{\P^5}(2)(-2E)\boxtimes\O_{\P(L)}(1)\right)(-\CE).
\end{align*}
Using the fact that~$\O(\E)\cong i_{\TH}^*(\O(E)\boxtimes\O_{\P(L)})$ we can rewrite this as
\[
    \O(\CE)\cong i_{\TH}^*\left(\pi^*\O_{\P^5}(2)(-3E)\boxtimes\O_{\P(L)}(1)\right).
\]
To summarise, we have the following commutative diagram:
\[
    \begin{tikzcd}
        E\times\P(L)\ar[r,"j\times\id"]&\Bl_P\P^5\times\P(L)&\TH\ar[l,hook',"i_{\TH}"]\ar[d,"\cong"]&\\
        \E\ar[r,hook]\ar[d]\ar[u,hook,"i_{\TH}"]&\Bl_{P\times\P(L)}\H_L\ar[d]\ar[r,"\cong"]\ar[u,hook,"i_{\TH}"]&\Bl_{\CX}(\P^4\times\P(L))\ar[d,"\cpi"]&\CE\ar[l,hook',"\cj"]\ar[d,"\cp"]\\
        P\times\P(L)\ar[r,hook]&\H_L&\P^4\times\P(L)&\CX\rlap{\ .}\ar[l,hook',"\ci"]
    \end{tikzcd}
\]

We are now ready to start imitating Kuznetsov's mutations~\cite{kuznetsovcubic} to prove the
proposition.  First note that after applying an overall twist by~$\pi^*\O_{\P^5}(3)(-2E)$ to the
semiorthogonal decomposition of~$\D(\TH)$ coming from~\eqref{eq:lef1stexample} we get:
\begin{multline*}
    \D(\TH)=\ang[\Big]{j_*\O_E\otimes\pi^*\O_{\P^5}(-2)(2E)\boxtimes\D(\P(L)),j_*\O_E\otimes\pi^*\O_{\P^5}(-1)(E)\boxtimes\D(\P(L)),\\
        \TC,\pi^*\O_{\P^5}\boxtimes\D(\P(L)),\pi^*\O_{\P^5}(1)\boxtimes\D(\P(L)),\pi^*\O_{\P^5}(2)\boxtimes\D(\P(L))}.
\end{multline*}
Note that in the case of~$\D(P)_{-4}$ we chose to write the structure sheaf of the point
as~$i^*\O_{\P^5}(-5)$, whereas in the case of~$\D(P)_{-3}$ we chose to write it
as~$i^*\O_{\P^5}(-4)$; the reasons for this will become clear later. Let us now show that~$\TC$ is
equivalent to~$\D(\CX)$. To do this we begin with the semiorthogonal decomposition of~$\TH$ coming
from the isomorphism~\eqref{eq:pointiso} and Orlov's theorem:
\begin{multline*}
    \D(\TH)=\ang[\Big]{\Phi(\D(\CX)),\pi^*\O_{\P^5}(-3)(3E)\boxtimes\D(\P(L)),\\
        \pi^*\O_{\P^5}(-2)(2E)\boxtimes\D(\P(L)),\dots,\pi^*\O_{\P^5}(1)(-E)\boxtimes\D(\P(L))}.
\end{multline*}
Here~$\Phi$ is the embedding given by~$\Phi(\dash)=\cj_*\cp^*(\dash)\otimes\O(\CE)$ and we chose to
decompose~$\D(\P^4)$ as
\[
    \D(\P^4)=\ang{\O_{\P^4}(-3),\O_{\P^4}(-2),\dots,\O_{\P^4}(1)}.
\]
In the above semiorthogonal decompositions of~$\D(\TH)$ and all that follow we will usually suppress
writing the implicit restriction~$i_{\TH}^*$ since it is always fully faithful on all the pieces
that appear and we implicitly keep applying the following lemma:

\begin{lem}
    \label{lem:mutationembedding}
    Let~$\A\subset\D(X)$ be an admissible full triangulated subcategory. If~$\F\in\A$ is an
    exceptional object, then for any~$\G\in\A$ we have isomorphisms
    \begin{gather*}
        \L_{i_{\A}(\F)}(i_{\A}(\G))\cong i_{\A}(\L_{\F}(\G)),\\
        \R_{i_{\A}(\F)}(i_{\A}(\G))\cong i_{\A}(\R_{\F}(\G)),
    \end{gather*}
    where~$i_{\A}\colon\A\hookrightarrow\T$ denotes the inclusion functor.
\end{lem}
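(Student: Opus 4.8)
The plan is to prove the two mutation isomorphisms by a direct computation, reducing both statements to the fact that mutation functors in an admissible subcategory are compatible with the inclusion. First I would recall that for an admissible subcategory $\A\subset\T$ with inclusion $i_\A$, the functor $i_\A$ has both adjoints $i_\A^*$ (left) and $i_\A^!$ (right), and moreover $i_\A^* i_\A\cong\id_\A\cong i_\A^! i_\A$. The key observation is that for an exceptional object $\F\in\A$, the object $i_\A(\F)\in\T$ is again exceptional: since $i_\A$ is fully faithful, $\RHom_\T(i_\A\F,i_\A\F)\cong\RHom_\A(\F,\F)\cong\mathbb{K}$.

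For the left mutation, I would start from the defining triangle in $\T$:
\[
    \RHom_\T(i_\A\F,i_\A\G)\otimes i_\A\F\to i_\A\G\to\L_{i_\A(\F)}(i_\A\G)\to.
\]
Since $i_\A$ is fully faithful, $\RHom_\T(i_\A\F,i_\A\G)\cong\RHom_\A(\F,\G)$, and since $i_\A$ is exact and commutes with tensoring by a complex of vector spaces (being an exact functor of triangulated categories), the first term is $i_\A(\RHom_\A(\F,\G)\otimes\F)$. Applying $i_\A$ to the defining triangle for $\L_\F(\G)$ inside $\A$ and comparing with the triangle above — using that $i_\A$ is exact, so it sends the cone of the evaluation map in $\A$ to the cone of its image in $\T$ — gives $\L_{i_\A(\F)}(i_\A\G)\cong i_\A(\L_\F(\G))$. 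The right mutation isomorphism is entirely dual, using the triangle $\R_{i_\A(\F)}(i_\A\G)\to i_\A\G\to\RHom_\T(i_\A\G,i_\A\F)^*\otimes i_\A\F\to$ and the same fully faithfulness and exactness of $i_\A$.

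The only subtlety — and the one point that genuinely needs admissibility rather than mere full faithfulness — is that mutation through $i_\A(\F)$ in $\T$ is a well-defined functor at all, i.e.\ that the subcategory generated by $i_\A(\F)$ is admissible in $\T$. Since $\A$ is admissible in $\T$ and $\F$ generates an admissible (indeed exceptional) subcategory of $\A$, the composite $\ang{i_\A(\F)}\subset\A\subset\T$ is admissible in $\T$ by transitivity of admissibility. Once this is in place, both mutations are computed by the triangles above and the argument is essentially formal; I do not expect any real obstacle, as the whole content is the compatibility of an exact fully faithful functor with cones and with $\Hom$-spaces. I would keep the write-up to a few lines, spelling out only the left-mutation triangle and remarking that the right mutation is dual.
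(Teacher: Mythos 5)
Your proposal is correct and follows essentially the same route as the paper's proof: both write the mutation through an exceptional object as the explicit cone on the (co)evaluation map, identify the Hom-spaces via full faithfulness of $i_{\A}$, and commute $i_{\A}$ past the tensor product and the cone. Your extra remark on the admissibility of $\ang{i_{\A}(\F)}$ in $\T$ is a reasonable point of care that the paper leaves implicit, but it does not change the argument.
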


\begin{proof}
    This is just a simple consequence of the fact that one can write mutation through an exceptional
    object explicitly. E.g.\ in the case of left mutation we compute
    \begin{align*}
        \L_{i_{\A}(\F)}(i_{\A}(\G))&=\Cone(i_{\A}(\RHom(\F,i_{\A}^!(i_{\A}(\G)))\otimes\F)\to
        i_{\A}(\G))\\
        &=i_{\A}(\Cone(\RHom(\F,\G)\otimes\F\to\G))\\
        &=i_{\A}(\L_{\F}(\G)).
    \end{align*}
    The case of right mutation is analogous.
\end{proof}

\subsubsection*{Step 1}
We mutate the first three pieces after~$\Phi(\D(\CX))$ all the way to the left. To do this we apply
Proposition~\ref{prop:lefgeneral} to obtain
\[
    \L_{\Phi(\D(\CX))}(\cpi^*\F)=\cpi^*\F(\CE).
\]
Using the previous expression for~$\O(\CE)$ we obtain the following semiorthogonal decomposition
after the mutation:
\begin{multline*}
    \D(\TH)=\ang[\Big]{\pi^*\O_{\P^5}(-1)\boxtimes\D(\P(L)),\pi^*\O_{\P^5}(-E)\boxtimes\D(\P(L)),\\
        \pi^*\O_{\P^5}(1)(-2E)\boxtimes\D(\P(L)),\Phi(\D(\CX)),\pi^*\O_{\P^5}\boxtimes\D(\P(L)),\pi^*\O_{\P^5}(1)(-E)\boxtimes\D(\P(L))}.
\end{multline*}

\subsubsection*{Step 2}
We mutate~$\Phi(\D(\CX))$ all the way to the right. All we need to remark at this point is
that~$\Phi$ is given by the Fourier--Mukai transform whose kernel is
\[
    (\cp\times\cj)_*\O_{\CE}(-1),
\]
which is of course supported on the fibre product~$\CX\times_{\P(L)}\TH$. We can write the result of
the mutation in terms of a modified embedding
\[
    \Phi'=\R_{\pi^*\O_{\P^5}(1)(-E)\boxtimes\D(\P(L))}\circ\R_{\pi^*\O_{\P^5}\boxtimes\D(\P(L))}\circ\Phi,
\]
and we just want to make sure that~$\Phi'$ can still be written as a Fourier--Mukai transform whose
kernel is supported on~$\CX\times_{\P(L)}\TH$. This fact is an immediate consequence of the
following two lemmas:

\begin{lem}
    \label{lem:mukai}
    Let~$X,Y,Z$ be smooth projective varieties equipped with regular maps to a smooth projective
    variety~$B$ and assume that either one of the projections $X\times_B Y\to Y$ or~$Y\times_B Z\to
    Y$ is flat and~$Y$ is proper over~$B$. Given $\F\in\mathbf{D}^{b}(X\times_B Y)$
    and~$\G\in\mathbf{D}^{b}(Y\times_B Z)$, if either one of the kernels is perfect, the relative
    convolution
    \[
        \pi_{X,Z *}^B(\pi_{X,Y}^{B,*}\F\otimes\pi_{Y,Z}^{B,*}\G)
    \]
    gives an isomorphism 
    \[
        \Phi_{Y\to Z}^{B,\G}\circ\Phi_{X\to Y}^{B,\F}\cong\Phi_{X\to Z}^{B,\F*\G},
    \]
    with~$\F*\G\in\D(X\times_B Z)$.
\end{lem}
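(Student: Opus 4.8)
The plan is to reduce the statement to the classical (absolute) composition formula for Fourier--Mukai transforms by base-change along the various projection maps, keeping track of which flatness and properness hypotheses make each base-change square commute at the derived level. First I would fix notation: let $p_{XY}\colon X\times_B Y\to X\times Y$, $p_{YZ}$, $p_{XZ}$ denote the closed immersions into the absolute products, and let $\pi^B_{X,Y}\colon X\times_B Y\times_B Z\to X\times_B Y$ etc. be the relative projections from the triple fibre product. The composition $\Phi^{B,\G}_{Y\to Z}\circ\Phi^{B,\F}_{X\to Y}$ is, by definition, obtained by pulling $\F$ and $\G$ back to $X\times_B Y\times_B Z$, tensoring, and pushing down to $X\times_B Z$. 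So the content of the lemma is really just the assertion that this triple-product construction computes the composition, i.e.\ that the two intermediate pushforward-then-pullback operations can be interchanged. That is exactly where a base-change theorem is needed.

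The key steps, in order, would be: (1) Write out $\Phi^{B,\F}_{X\to Y}(-)=\pi^B_{Y*}(\pi_X^{B*}(-)\otimes\F)$ on $X\times_B Y$, and similarly for $\G$; then expand the composition as $\pi^B_{Z*}\big(\pi^{B*}_Y\pi^B_{Y*}(\pi^{B*}_X(-)\otimes\F)\otimes\G\big)$, where now all maps are the relative projections from $X\times_B Y\times_B Z$. (2) Apply the base-change isomorphism to the square
\[
    \begin{tikzcd}
        X\times_B Y\times_B Z\ar[r]\ar[d]&X\times_B Y\ar[d]\\
        Y\times_B Z\ar[r]&Y\rlap{\ ,}
    \end{tikzcd}
\]
which is Cartesian; the hypothesis that one of $X\times_B Y\to Y$ or $Y\times_B Z\to Y$ is flat (together with $Y$ proper over $B$, so that the relevant pushforward is defined and the square is "exact" in Kuznetsov's sense) is precisely what guarantees $\pi^{B*}_Y\pi^B_{Y*}\cong(\text{base-changed pushforward})(\text{base-changed pullback})$ here. (3) After this swap the expression becomes $\pi^B_{Z*}\big(\pi^{B*}_X(-)\otimes\pi^{B*}_{XY}\F\otimes\pi^{B*}_{YZ}\G\big)$, which by the projection formula and the definition of $\F*\G\coloneqq\pi^B_{X,Z*}(\pi^{B,*}_{X,Y}\F\otimes\pi^{B,*}_{Y,Z}\G)$ is exactly $\Phi^{B,\F*\G}_{X\to Z}(-)$, with $\F*\G\in\D(X\times_B Z)$ well-defined as a bounded complex because one of the kernels is perfect (so the derived tensor product is bounded) and $Y$ is proper over $B$ (so the pushforward preserves $\D^b$).

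The main obstacle I expect is Step (2): making sure the base-change isomorphism genuinely holds in the derived category in this relative, possibly-singular setting. The subtlety is that $X$, $Y$, $Z$ are smooth but $B$ need not be, so the fibre products can be badly behaved and the naive square need not be Tor-independent; this is exactly why the lemma hypothesizes flatness of one leg over $Y$. I would be careful to phrase the argument so that the flatness of (say) $X\times_B Y\to Y$ is used to make the square $X\times_B Y\times_B Z\to Y\times_B Z$ over $X\times_B Y\to Y$ Tor-independent, invoking the flat base-change theorem, and to note that the properness of $Y$ over $B$ is what allows the relative pushforward $\pi^B_{Y*}$ to be defined and to commute with base change in the first place. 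The symmetry of the hypothesis (either leg flat) corresponds to doing the base change along the mirror-image square; I would remark that the two choices give the same $\F*\G$. The remaining verifications — that $\F*\G$ lands in $\D^b$ and that the projection formula applies — are routine given the perfectness and properness assumptions, so I would not belabour them.
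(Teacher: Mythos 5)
Your proposal is correct and follows essentially the same route as the paper, whose proof simply points to Huybrechts' composition-of-kernels argument and observes that the flatness hypothesis makes the relevant Cartesian square exact in Kuznetsov's sense so that base change applies, with perfectness and properness over $B$ handling boundedness. (One minor slip: the lemma assumes $B$ is smooth projective, so the issue is not that $B$ might be singular but, as you correctly identify in substance, that the fibre products over $B$ need not be Tor-independent without the flatness assumption.)
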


\begin{proof}
    This is just a matter of looking at the diagram of the corresponding proposition in Huybrechts'
    book~\cite[Proposition~5.10]{huybrechts} and noting that with the flatness assumption we get an
    exact Cartesian square~\cite[Section~2.6]{kuznetsovhpd} and everything else generalises from
    products to fibre products. Observer that if one prefers to work with absolute integral
    functors, one just notices that
    \[
     \Phi_{X\to Y}^{B,\F}\cong\Phi_{X\to Y}^{j_{X,Y,*}\F}
    \]
    where~$j_{X,Y}\colon X\times_B Y\to X\times Y$.
\end{proof}

\begin{lem}
    \label{lem:fibreproductkernel}
    Let~$i_Y\colon Y\hookrightarrow X\times B$ be a closed immersion where all the varieties are
    smooth and projective, and assume that there is an exceptional object~$\F\in\D(X)$ such that the
    derived pullback functor~$i_Y^*$ is fully faithful on~$\ang{\F}\boxtimes\D(B)$. Then both left
    and right mutation through $i_Y^*(\F\boxtimes\D(B))$ in~$\D(Y)$ can be written as a
    Fourier--Mukai transform whose kernel is supported on~$Y\times_B Y$.
\end{lem}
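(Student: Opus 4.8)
The plan is to write the mutation functors explicitly as cones and identify the Fourier--Mukai kernels. First I would recall that left mutation through an exceptional object $\F\boxtimes\O_B$ (restricted to $Y$ via $i_Y$, using Lemma~\ref{lem:mutationembedding}) fits into the distinguished triangle
\[
    i_Y^*(\F\boxtimes\D(B))\otimes\RHom_{\pi_B}(i_Y^*(\F\boxtimes\O_B),\dash)\to\id_{\D(Y)}\to\L_{i_Y^*(\F\boxtimes\D(B))},
\]
where $\RHom_{\pi_B}$ denotes the relative derived Hom over $B$ along the structure map $Y\to B$. The identity functor is $\Phi^{\O_{\Delta_Y}}$ with kernel the structure sheaf of the diagonal, which is supported on $Y\times_B Y$, so it suffices to show the first functor in the triangle has kernel supported on $Y\times_B Y$ as well; the cone of a morphism of Fourier--Mukai functors is again Fourier--Mukai with kernel the cone of the corresponding morphism of kernels, so this will finish the argument.

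The first functor is a composition: apply $\Phi^{\G_1}_{Y\to B}$ given by relative Hom out of $i_Y^*(\F\boxtimes\O_B)$ — i.e. the kernel is $(i_Y\times\id_B)_*$ of $(\F^\vee\boxtimes\O_B)$ pushed to $Y\times B$, which is visibly supported on $Y\times_B B=Y$, hence over $B$ — and then apply $\Phi^{\G_2}_{B\to Y}$ which tensors the pullback from $B$ by $i_Y^*(\F\boxtimes\O_B)$, whose kernel is supported on $B\times_B Y=Y$ likewise. By Lemma~\ref{lem:mukai} (one of the two projections from the relevant fibre products to $B$ is flat, since $B$ is smooth and the kernels in question are perfect — $\F$ is exceptional hence perfect on $X$ smooth, and $i_Y^*(\F\boxtimes\O_B)$ is perfect on $Y$ smooth) the composition is again a relative Fourier--Mukai transform $\Phi^{\G_1*\G_2}_{Y\to Y}$ with $\G_1*\G_2\in\D(Y\times_B Y)$, in particular with kernel supported on $Y\times_B Y$. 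The case of right mutation is handled identically, replacing $\RHom$ out of $\F$ by $\RHom$ into $\F$ and swapping the order of adjunction.

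The only delicate point is the verification of the flatness/properness hypotheses in Lemma~\ref{lem:mukai} at each application; but since $B$ is smooth and we are pulling back along projections of the form $Y\times_B B\to B$, which is just $Y\to B$, one checks that the relevant square is automatically exact Cartesian once one of the kernels is perfect — precisely the escape clause built into Lemma~\ref{lem:mukai}. I expect this bookkeeping to be the main (and essentially only) obstacle: there is nothing conceptually hard here, but one must be careful to apply the relative convolution in the correct order and to confirm that each intermediate kernel really is supported over $B$ so that the iterated fibre products collapse as claimed.
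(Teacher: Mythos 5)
Your overall strategy coincides with the paper's: realise the mutation as the cone of the (co)evaluation map against the identity, note that $\O_{\Delta_Y}$ is supported on $Y\times_B Y$, and exhibit the projection functor onto $i_Y^*(\F\boxtimes\D(B))$ as a convolution of kernels living over $B$. The only structural difference is the factorisation: you factor $\alpha\alpha^!$ through $\D(B)$ itself, whereas the paper factors it through $\D(X\times B)$, writing the kernel as the triple convolution $((\id_Y\times i_Y)_*\O_Y)*(\F^{\vee}\boxtimes\F\boxtimes\O_{\Delta_B})*((i_Y\times\id_Y)_*\O_Y)$ and computing it explicitly as $(i_Y\times_B i_Y)^*(\F^{\vee}\boxtimes\F\boxtimes\O_{\Delta_B})$, with the dualising twist $\Lambda^c\N[-c]$ and $\F^{\vee}\otimes\omega_X[\dim X]$ appearing in the right-mutation kernel.

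There is, however, a genuine gap in your justification of the convolution step. Applying Lemma~\ref{lem:mukai} with middle variety $B$ requires one of the projections $Y\times_B B\to B$ or $B\times_B Y\to B$ to be flat; both are simply $\pi_B\colon Y\to B$, and flatness of $\pi_B$ is not a hypothesis of the lemma (nor does it follow: $Y$ is an arbitrary smooth closed subvariety of $X\times B$). Your proposed ``escape clause'' misreads Lemma~\ref{lem:mukai}: perfectness of a kernel is an \emph{additional} hypothesis there, on top of flatness, not a substitute for it, and perfectness cannot make a square of schemes exact Cartesian --- the base change $\L\pi_B^*\R\pi_{B*}\cong\R p_{2*}\L p_1^*$ you need is Tor-independence of $Y$ with itself over $B$, which fails for non-flat $\pi_B$. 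Two repairs are available. Either follow the paper and convolve over $X\times B$, where the projection $(X\times B)\times_B(X\times B)\to X\times B$ is genuinely flat and the outer kernels are perfect because the graph maps are regular embeddings; or observe that the lemma only claims a statement about \emph{support}, so you may compute the absolute convolution on $Y\times B\times Y$: the tensor product of the two pulled-back kernels is supported on $\setb{(y,b,y')}{b=\pi_B(y)=\pi_B(y')}\cong Y\times_B Y$, hence so is its pushforward to $Y\times Y$, with no base change required. Finally, for the right mutation the relevant adjoint is the \emph{left} adjoint of the embedding, which carries a relative dualising twist; ``swapping the order of adjunction'' should be made precise there, as the paper does via $i_{Y,!}=i_{Y,*}(\dash\otimes\Lambda^c\N[-c])$.
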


\begin{proof}
    Note first that the left adjoint~$i_{Y,!}$ of the derived pullback~$i_Y^*$ is given by
    \[
        i_{Y,!}(\dash)=i_{Y,*}(\dash\otimes\Lambda^c\N[-c]),
    \]
    where~$\N$ is the normal bundle of $Y$ in $X\times B$ and~$c$ is the codimension. Now we can say
    that left and right mutation through $i_Y^*(\F\boxtimes\D(B))$ in~$\D(Y)$ are given by the
    Fourier--Mukai transforms with kernels
    \begin{gather*}
        \Cone(\K_{\L}\to\O_{\Delta_Y}),\\
        \Cone(\O_{\Delta_Y}\to\K_{\R})[-1],
    \end{gather*}
    respectively, where $\K_{\L}$ and~$\K_{\R}$ are the convolutions
    \begin{gather*}
        \K_{\L}=((\id_Y\times
        i_Y)_*\O_Y)*(\F^{\vee}\boxtimes\F\boxtimes\O_{\Delta_B})*((i_Y\times\id_Y)_*\O_Y),\\
        \K_{\R}=((\id_Y\times i_Y)_*\Lambda^c\N[-c])*((\F^{\vee}\otimes\omega_X[\dim
        X])\boxtimes\F\boxtimes\O_{\Delta_B})*((i_Y\times\id_Y)_*\O_Y).
    \end{gather*}
    Observe that the graph map $(\id_Y\times_B i_Y)\colon Y\to Y\times_B (X\times B)$ is a regular
    embedding, so by Lemma~\ref{lem:mukai} it is possible to consider the relative convolution. We
    can actually compute the convoluted kernels explicitly, getting:
    \begin{gather*}
        \K_{\L}=(i_Y\times_B
        i_Y)^*(\F^{\vee}\boxtimes\F\boxtimes\O_{\Delta_B}),\\
        \K_{\R}=(i_Y\times_B i_Y)^*((\F^{\vee}\otimes\omega_X[\dim
        X])\boxtimes\F\boxtimes\O_{\Delta_B})\otimes\pi_Y^*\Lambda^c\N[-c],
    \end{gather*}
    where we denoted by $\pi_Y\colon Y\times_B Y\to Y$ the projection onto the second factor.
\end{proof}

Looking at the explicit expression of~$\Cone(\O_{\Delta_Y}\to\K_{\R})[-1]$ we see that for~$\F$
locally free, the mutations through~$i^*_{\TH}(\F\boxtimes\D(\P(L))$ can be written as relative
Fourier--Mukai transforms whose kernels are actually perfect. We thus conclude that after mutating
we have the semiorthogonal decomposition
\begin{multline*}
    \D(\TH)=\ang[\Big]{\pi^*\O_{\P^5}(-1)\boxtimes\D(\P(L)),\pi^*\O_{\P^5}(-E)\boxtimes\D(\P(L)),\\
        \pi^*\O_{\P^5}(1)(-2E)\boxtimes\D(\P(L)),\pi^*\O_{\P^5}\boxtimes\D(\P(L)),\pi^*\O_{\P^5}(1)(-E)\boxtimes\D(\P(L)),\Phi'(\D(\CX))}.
\end{multline*}

\subsubsection*{Step 3}
We now transpose the third and fourth piece of the semiorthogonal decomposition since they are
completely orthogonal. Indeed we have
\begin{align*}
    \Ext^{\bullet}_{\D(\Bl_P\P^5)}(\pi^*\O_{\P^5}(1)(-2E),\pi^*\O_{\P^5})=0
\end{align*}
by Proposition~\ref{prop:lefgeneral} and thus from the K\"unneth formula it follows that our pieces
are completely orthogonal. The semiorthogonal decomposition after this step is now
\begin{multline*}
    \D(\TH)=\ang[\Big]{\pi^*\O_{\P^5}(-1)\boxtimes\D(\P(L)),\pi^*\O_{\P^5}(-E)\boxtimes\D(\P(L)),\\
        \pi^*\O_{\P^5}\boxtimes\D(\P(L)),\pi^*\O_{\P^5}(1)(-2E)\boxtimes\D(\P(L)),\pi^*\O_{\P^5}(1)(-E)\boxtimes\D(\P(L)),\Phi'(\D(\CX))}.
\end{multline*}

\subsubsection*{Step 4}
We now right mutate the second piece through the third one and the fourth one through the fifth one.
We will only explain the first case since the second one works in exactly the same way. For that we
will decompose~$\D(\P(L))$ in the usual way so that we're effectively mutating exceptional objects.
Again by Proposition~\ref{prop:lefgeneral} and the K\"unneth formula we have for~$0\leq k'\leq k\leq
l$:
\[
    \dim\Ext^p(\pi^*\O_{\P^5}(-E)\boxtimes\O_{\P(L)}(k),\pi^*\O_{\P^5}\boxtimes\O_{\P(L)}(k'))=\begin{cases}
        1&\text{if $p=0$ and $k=k'$,}\\
        0&\text{else.}
    \end{cases}
\]
The first implication of this is that we can just swap pieces until we are in the situation
of~$k=k'$. In that case then we have the distinguished triangle
\[
    \R_{\pi^*\O_{P^5}}(\pi^*\O_{\P^5}(-E))\to\pi^*\O_{\P^5}(-E)\to\pi^*\O_{\P^5}\to\cdots,
\]
where we omitted the~$(\dash)\boxtimes\O_{\P(L)}(k)$ in every term. But we also have the short exact
sequence
\[
    0\to\pi^*\O_{\P^5}(-E)\to\pi^*\O_{\P^5}\to j_*\O_E\to 0,
\]
and thus after shifting and comparing we obtain
\[
    \R_{\pi^*\O_{\P^5}\boxtimes\O_{\P(L)}(k)}(\pi^*\O_{\P^5}(-E)\boxtimes\O_{\P(L)}(k))\cong
    j_*\O_E[-1]\boxtimes\O_{\P(L)}(k).
\]
With the same argument for the second case we obtain
\begin{multline*}
    \R_{\pi^*\O_{\P^5}(1)(-E)\boxtimes\O_{\P(L)}(k)}(\pi^*\O_{\P^5}(1)(-2E)\boxtimes\O_{\P(L)}(k))\cong\\
    \cong j_*\O_E\otimes\pi^*\O_{\P^5}(1)(-E)[-1]\boxtimes\O_{\P(L)}(k).
\end{multline*}
Finally we note that cases where~$k'>k$ only arise once a piece has been properly mutated and then
we again have complete orthogonality and can finish with a series of swaps. The semiorthogonal
decomposition that we finally obtain is thus
\begin{multline*}
    \D(\TH)=\ang[\Big]{\pi^*\O_{\P^5}(-1)\boxtimes\D(\P(L)),\pi^*\O_{\P^5}\boxtimes\D(\P(L)),j_*\O_E\boxtimes\D(\P(L)),\\
        \pi^*\O_{\P^5}(1)(-E)\boxtimes\D(\P(L)),j_*\O_E\otimes\pi^*\O_{\P^5}(1)(-E)\boxtimes\D(\P(L)),\Phi'(\D(\CX))}.
\end{multline*}

\subsubsection*{Step 5}
We now left mutate the fourth piece through the third one. For that we again decompose~$\D(\P(L))$
as usual. By Proposition~\ref{prop:lefgeneral}, the K\"unneth formula and the fact that~$j$ is a
divisorial embedding we now have for~$0\leq k'\leq k\leq l$:
\[
    \dim\Ext^p(j_*\O_E\boxtimes\O_{\P(L)}(k),\pi^*\O_{\P^5}(1)(-E)\boxtimes\O_{\P(L)}(k'))=\begin{cases}
        1&\text{if $p=1$ and $k=k'$,}\\
        0&\text{else.}
    \end{cases}
\]
It follows again that we can just swap pieces until we are in the case of~$k=k'$ and we have the
distinguished triangle
\[
    j_*\O_E[-1]\to\pi^*\O_{\P^5}(1)(-E)\to\L_{j_*\O_E}(\pi^*\O_{\P^5}(1)(-E))\to\cdots,
\]
where we again omitted~$(\dash)\boxtimes\O_{\P(L)}(k)$ in every term. Since we are blowing up a
point we have the short exact sequence
\[
    0\to\pi^*\O_{\P^5}(1)(-E)\to\pi^*\O_{\P^5}(1)\to j_*\O_E\to 0.
\]
Thus after shifting and comparing we obtain
\[
    \L_{j_*\O_E\boxtimes\O_{\P(L)}(k)}(\pi^*\O_{\P^5}(1)(-E)\boxtimes\O_{\P(L)}(k))\cong\pi^*\O_{\P^5}(1)\boxtimes\O_{\P(L)}(k).
\]
Hence we now have the semiorthogonal decomposition
\begin{multline*}
    \D(\TH)=\ang[\Big]{\pi^*\O_{\P^5}(-1)\boxtimes\D(\P(L)),\pi^*\O_{\P^5}\boxtimes\D(\P(L)),\pi^*\O_{\P^5}(1)\boxtimes\D(\P(L)),\\
        j_*\O_E\boxtimes\D(\P(L)),j_*\O_E\otimes\pi^*\O_{\P^5}(1)(-E)\boxtimes\D(\P(L)),\Phi'(\D(\CX))}.
\end{multline*}

\subsubsection*{Step 6}
Finally we mutate the rightmost three pieces all the way to left. For this we note that the
canonical bundle of~$\TH$ is
\[
    \omega_{\TH}=i_{\TH}^*\left(\pi^*\O_{\P^5}(-3)(2E)\boxtimes\omega_{\P(L)}(1)\right),
\]
and thus after an additional twist by~$i_{\TH}^*\left(\pi^*\O_{\P^5}(1)\boxtimes\O_{\P(L)}\right)$
we obtain
\begin{multline*}
    \D(\TH)=\ang[\Big]{j_*\O_E\otimes\pi^*\O_{\P^5}(-2)(2E)\boxtimes\D(\P(L)),j_*\O_E\otimes\pi^*\O_{\P^5}(-1)(E)\boxtimes\D(\P(L)),\\
        \Phi''(\D(\CX)),\pi^*\O_{\P^5}\boxtimes\D(\P(L)),\pi^*\O_{\P^5}(1)\boxtimes\D(\P(L)),\pi^*\O_{\P^5}(2)\boxtimes\D(\P(L))}.
\end{multline*}
Note that~$\Phi''$ is just~$\Phi'$ composed with tensoring and thus still a Fourier--Mukai transform
whose kernel is supported on~$\CX\times_{\P(L)}\TH$; hence we are done.

\begin{rem}
    Note that by Theorem~\ref{thm:lefmult} we see that~$\D(\CX)$ is a categorical resolution of
    singularities of the noncommutative K3-fibration~$\C_L\to\P(L)$. This is basically a family
    version of Kuznetsov's result on singular cubic fourfolds~\cite{kuznetsovcubic}. By taking a
    generic pencil we also recover Calabrese and Thomas' example of derived equivalent Calabi--Yau
    threefolds~\cite{calabresethomas}.
\end{rem}

\subsection{Second example}
Consider now the linear system 
\[
    L=H^0(\I_{\P(W)}(3))\subset H^0(\O_{\P^5}(3))
\]
of cubic fourfolds containing a plane $\P(W)$. Looking at the blowup~$\Bl_{\P(W)}\P^5$, we can
identify~$L$ with the complete linear system~$H^0(\pi^*\O_{\P^5}(3)(-E))$, where as usual~$\pi$ denotes the
projection from the blow up. The line bundle~$\pi^*\O_{\P^5}(3)(-E)$ has no base
locus~\cite[Lemma~3.5]{calabresethomas} so we get a regular map
\[
    \begin{tikzcd}
        \Bl_{\P(W)}\P^5\ar[d,"\pi"]\ar[dr,"f"]&\\
        \P^5\ar[r,dashed]&\P(L^*)\rlap{\ .}
    \end{tikzcd}
\]
By Proposition~\ref{prop:lefgeneral} the following is a Lefschetz decomposition for
$\Bl_{\P(W)}\P^5$ with respect to the line bundle $\pi^*\O(3)(-E)$:
\[
\D(\Bl_{\P(W)}\P^5)=\ang*{\TA,\TA(1)},
\]
where we recall that
\[
    \TA\coloneqq\ang*{\pi^*\A\otimes\O(2E),\D(\P(W))_{-2}},
\]
$\A$ being the first component in the rectangular Lefschetz decomposition of $\D(\P^5)$ with respect
to $\O_{\P^5}(3)$. Write $\P^5=\P(W\oplus W')$.

\begin{prop}
    The HP dual of~$\Bl_{\P(W)}\P^5\to\P(L^*)$ with respect to the above Lefschetz decomposition is
    the noncommutative variety $(\P(W')\times\P(L),\C_0)$, where~$\C_0$ is the even Clifford algebra
    sheaf on~$\P(W')\times\P(L)$ corresponding to the quadric fibration~$\check{\pi}$:
    \[
        \begin{tikzcd}
            \TH\ar[dr,"\check{\pi}"']\ar[r,"i_{\TH}",hook]
            &\P\left(\underline{W}(1)\oplus\O_{\P(W')}\boxtimes\O_{\P(L)}(1)\right)\ar[d,"\phi\times
            \id_{\P(L)}"]\\
            &\P(W')\times\P(L)\rlap{\ .}
        \end{tikzcd}
    \]
\end{prop}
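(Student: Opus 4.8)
The plan is to follow Kuznetsov's treatment of cubic fourfolds containing a plane, adapted to the family setting over $\P(L)$. The key geometric input is that the linear projection from the plane $\P(W)$ exhibits $\Bl_{\P(W)}\P^5$ as a $\P^2$-bundle over $\P(W')\cong\P^2$; concretely $\Bl_{\P(W)}\P^5\cong\P(\underline{W}\oplus\O_{\P(W')}(-1))\to\P(W')$, where $\underline{W}$ is the trivial bundle with fibre $W$. Under this description the universal hyperplane section $\TH\subset\Bl_{\P(W)}\P^5\times\P(L)$ becomes a divisor of relative degree two over $\P(W')\times\P(L)$, that is, a quadric fibration $\check\pi\colon\TH\to\P(W')\times\P(L)$ sitting inside $\P(\underline{W}(1)\oplus\O_{\P(W')}\boxtimes\O_{\P(L)}(1))$. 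This is exactly the picture in the statement. I would first establish this quadric-fibration structure explicitly: compute $f_*(\pi^*\O_{\P^5}(3)(-E))$ by the projection formula using $f^*\O_{\P(W')}(1)\cong\pi^*\O_{\P^5}(1)$ (projection from a plane, not a point, so here the twist by $E$ behaves accordingly), identify the defining section of $\TH$ with a section of $\mathrm{Sym}^2$ of the relevant bundle tensored with $\O_{\P(L)}(1)$, and read off the embedding.

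Next I would invoke Kuznetsov's theorem on quadric fibrations \cite{kuznetsovquadric}, in its relative form over the base $\P(W')\times\P(L)$: the even Clifford algebra $\C_0$ associated to the quadric fibration $\check\pi$ is a sheaf of algebras on $\P(W')\times\P(L)$, and there is a semiorthogonal decomposition
\[
    \D(\TH)=\ang*{\D(\P(W')\times\P(L),\C_0),\D(\P(W')\times\P(L)),\dots,\D(\P(W')\times\P(L))},
\]
with the number of trivial $\D(\P(W')\times\P(L))$-factors equal to the relative dimension of the quadric fibration (here the fibres are quadric surfaces in $\P^3$, so two such factors, matching the rank-two Lefschetz pattern). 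The embedding of $\D(\P(W')\times\P(L),\C_0)$ is given by a Fourier--Mukai kernel supported on the fibre product, so it will automatically verify the fibre-product-support condition required by Definition~\ref{df:hpd}. The remaining task is to match this decomposition against the one coming from our Lefschetz decomposition \eqref{eq:lef1stexample}: applying an overall twist by $\pi^*\O_{\P^5}(3)(-E)$ to the semiorthogonal decomposition of $\D(\TH)$ induced by $\ang*{\TA,\TA(1)}$ on $\D(\Bl_{\P(W)}\P^5)$ and Orlov's blowup theorem, one obtains a decomposition whose pieces are $j_*\O_E$-type terms and $\pi^*\O_{\P^5}$-type terms sitting around the categorical dual $\TC$. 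I would then perform the same sequence of mutations as in the first example — mutating the $\D(\P(W))$-blocks past the $\pi^*\A$-blocks using Proposition~\ref{prop:lefgeneral}, transposing orthogonal pieces, and collapsing pairs via the divisorial short exact sequences on $E$ — to bring the decomposition into the shape predicted by Kuznetsov's quadric-fibration decomposition, thereby identifying $\TC\cong\D(\P(W')\times\P(L),\C_0)$. Throughout, Lemmas~\ref{lem:mukai} and~\ref{lem:fibreproductkernel} guarantee that the Fourier--Mukai kernel of the embedding of $\TC$ stays supported on $(\P(W')\times\P(L))\times_{\P(L)}\TH$ under all these mutations.

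The main obstacle I expect is the careful bookkeeping of twists in the identification of $\TH$ as a quadric fibration: since we are blowing up a plane rather than a point, the relative $\O(1)$ of the $\P^2$-bundle, the exceptional divisor $E$, and the pullbacks $\pi^*\O_{\P^5}(1)$ and $\phi^*\O_{\P(W')}(1)$ interact in a way that must be tracked precisely to see that the defining section of $\TH$ really lands in $H^0(\mathrm{Sym}^2(\underline{W}^\vee(-1)\oplus\O)\otimes(\text{twist})\boxtimes\O_{\P(L)}(1))$ and hence determines a quadric fibration inside the projective bundle displayed in the statement — and, relatedly, to check that the resulting quadric fibration has the correct discriminant behaviour (generically nonsingular quadric surface, degenerating along a sextic curve in $\P(W')$, as in Kuznetsov's cubic-fourfold case) so that $\C_0$ is genuinely a nontrivial sheaf of algebras and not Morita-trivial. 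A secondary, more routine difficulty is verifying that the number of trivial blocks and their twists produced by Kuznetsov's quadric-fibration decomposition match exactly the $l-2=?$ — here the rank-two rectangular — pattern coming from \eqref{eq:lef1stexample}, but this is a direct comparison of two explicit lists once the geometry is pinned down.
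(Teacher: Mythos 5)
Your proposal follows the paper's proof essentially verbatim: realise $\Bl_{\P(W)}\P^5$ as the projective bundle $\P\left(W\otimes\O_{\P(W')}(1)\oplus\O_{\P(W')}\right)$ over $\P(W')$, identify $\TH$ as a quadric fibration over $\P(W')\times\P(L)$ cut out by a section of $S^2$ of the relevant bundle twisted by $\O(1,1)$, invoke Kuznetsov's quadric-fibration decomposition, and then mutate that decomposition into the one induced by $\ang{\TA,\TA(1)}$, with Lemmas~\ref{lem:mukai} and~\ref{lem:fibreproductkernel} controlling the supports of the kernels. The only slips lie in the bookkeeping you yourself flag as the delicate point: the bundle is a $\P^3$-bundle (rank four), and the correct relations are $\phi^*\O_{\P(W')}(1)\cong\pi^*\O_{\P^5}(1)(-E)$ and $\O_{\phi}(1)\cong\O(E)$, which combine to give $\pi^*\O_{\P^5}(3)(-E)\cong\O_{\phi}(2)\otimes\phi^*\O_{\P(W')}(3)$ and hence the quadric structure; the discriminant analysis you worry about is not actually needed for the HPD statement.
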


The claim indeed follows as a direct generalisation of \cite[Section~3]{calabresethomas},
\cite[Section~4]{kuznetsovcubic}. Let's start explaining why $\check{\pi}$ is a quadric fibration.
The linear projection $\P^5\dashrightarrow\P(W')$ induces a regular map
\[
    \begin{tikzcd}
        \Bl_{\P(W)}\P^5\ar[r,"\phi"]& \P(W')\rlap{\ .}
    \end{tikzcd}
\]
As pointed out in \cite{calabresethomas}, $\phi$ is the $\P^3$-bundle given by the projective
completion $\P\left(W\otimes\O_{\P(W')}(1)\oplus\O_{\P(W')}\right)$ of $W\otimes\O_{\P(W')}(1)$ over
$\P(W')$. Computing the Grothendieck line bundle one gets
$\O_{\phi}(1)\cong\O_{\widetilde{\P^5}}(E)$; this computation together with the canonical relation
\begin{equation}
    \label{hande}
    \pi^*\O_{\P^5}(1)(-E)=\phi^*\O_{\P(W')}(1)
\end{equation}
gives
\[
    \pi^*\O_{\P^5}(3)(-E)=\O_{\phi}(2)\otimes\phi^*\O_{\P(W')}(3).
\]
This means that the universal family $\TH$ that we know is cut out by the tautological section of
$\pi^*\O_{\P^5}(3)(-E)\boxtimes\O_{\P(L)}(1)$ is an element of the linear system
\[
    \abs*{\O_{\phi}(2)\otimes\phi^*\O_{\P(W')}(3)\boxtimes\O_{\P(L)}(1)},
\]
namely, it is cut out by a section of
$S^2(\underline{W}^*\oplus\O_{P(W')}(1))\otimes\O_{\P(W')\times\P(L)}(1,1)$ (we refer again to
\cite[Section~3]{calabresethomas} for the details of the computation). By
\cite[Theorem~4.2]{kuznetsovquadric} there exists a semiorthogonal decomposition
\begin{multline}
    \label{lef1}
    \D(\TH)=\ang[\big]{\Phi\left(\D(\P(W')\times\P(L)\right),\C_0),\check{\pi}^*\D(\P(W')\times\P(L))\otimes\O_{\check{\pi}}(1),\\
        \check{\pi}^*\D(\P(W')\times\P(L))\otimes\O_{\check{\pi}}(2)}
\end{multline}
where we are denoting with $\O_{\check{\pi}}(1)$ the restriction to $\TH$ of the tautological line
bundle on $\P\left(\underline{W}(1)\oplus\O_{\P(W')}\boxtimes\O_{\P(L)}(1)\right)$. Note that 
\[
    \O_{\check{\pi}}(1)\cong i_{\TH}^*\left(\O_{\phi}(1)\boxtimes\O_{\P(L)}(1)\right)\cong
    i_{\TH}^*\left(\O_{\widetilde{\P^5}}(E)\boxtimes\O_{\P(L)}(1)\right).
\]
After an overall twist by~$\O(-E,0)$ we can rewrite \eqref{lef1} in the following way:
\begin{multline}
    \label{lef2}
    \D(\TH)=\ang[\big]{\Phi'\left(\D(\P(W')\times\P(L),\C_0)\right),\phi^*\D(\P(W')\boxtimes\D(\P(L)),\\
        \phi^*\D(\P(W')(H)\boxtimes\D(\P(L))},
\end{multline}
where we denote by $H$ the twisting by $\pi^*\O_{\P^5}(1)$ and by $\Phi'$ the faithful embedding
given by $\Phi'(\dash)=\Phi(\dash)\otimes\O(-E,0)$. In \eqref{lef2} and in all that follows, we drop
the $i_{\TH}^*$ to simplify the notation, as it is fully faithful anyway. Before going into
mutations, we rewrite \eqref{lef2} in a more explicit way. Replace the first instance of
$\D(\P(W'))$ with the exceptional collection
$\left(\O_{\P(W')}(-1),\O_{\P(W')},\O_{\P(W')}(1)\right)$ and the second one with the exceptional
collection $\left(\O_{\P(W')},\O_{\P(W')}(1),\O_{\P(W')}(2)\right)$ and denote by $\O(h)$ the
twisting by $\phi^*\O_{\P(W')}(1)$. We get
\begin{multline}
    \label{lef3}
    \D(\TH)=\ang[\big]{\Phi'\left(\D(\P(W')\times\P(L),\C_0)\right),\O(-h)\boxtimes\D(\P(L)),\dots\\
        \dots\O(2h+H)\boxtimes\D(\P(L))}.
\end{multline}
On the other hand with~$\TH$ being the universal hyperplane section of~$\Bl_{\P(W)} \P^5$ with
respect to~$\pi^*\O_{\P^5}(3)(-E)$, it has a semiorthogonal decomposition of the form
\[
    \D(\TH)=\ang*{\TC,\TA(1)\boxtimes\D(\P(L))}
\]
Replace $\A$ with the exceptional collection $\ang{\O_{\P^5}(3),\O_{\P^5}(4), \O_{\P^5}(5)}$ and
$\D(\P(W))$ with the exceptional collection $\ang{\O_{\P(W)}(-3),\O_{\P(W)}(-2),\O_{\P(W)}(-1)}$.
The reason for these choices will be clear in a moment; writing this down explicitly we get:
\begin{multline*}
    \TA=\ang[\big]{\pi^*\O_{\P^5}(-3)(2E),\pi^*\O_{\P^5}(-2)(2E),\pi^*\O_{\P^5}(-1)(2E),j_*p^*\O_{\P^2}(-3)(2E),\\
        j_* p^*\O_{\P^2}(-2)(2E),j_*p^*\O_{\P^2}(-1)(2E)}.
\end{multline*}
We then get the following semiorthogonal decomposition for $\D(\TH)$:
\begin{multline}
    \label{lefb1}
    \D(\TH)=\ang[\big]{\TC',\pi^*\O_{\P^5}\boxtimes\D(P(L),\pi^*\O_{\P^5}(1)\boxtimes\D(\P(L)),\dots\\
        \dots j_*\O_E\otimes\pi^*\O_{\P^5}(2))\boxtimes\D(\P(L))}.
\end{multline}
where we denote by $\TC'$ the category $\TC\otimes\O(-E,0)$. Note that in \eqref{lefb1}, to get the
last three pieces of the semiorthogonal decomposition we used that
\begin{equation*}
    j_*(p^*\O_{\P(W)}(t)\otimes\O_E)\cong j_*(p^*i^*\O_{\P^5}(t)\otimes\O_E)\cong
    j_*(j^*\pi^*\O_{\P^5}(t)\otimes\O_E)
\end{equation*}
and the projection formula. Summing up, we get
\begin{multline}
    \label{lefhyp}
    \D(\TH)=\ang[\big]{\TC',\O\boxtimes\D(\P(L)),\O(H)\boxtimes\D(\P(L)),\\
        \O(2H)\boxtimes\D(\P(L)),j_*\O_E\boxtimes\D(\P(L)),j_*\O_E(H)\boxtimes\D(\P(L)),\\
        j_*\O_E(2H)\boxtimes\D(\P(L))}
\end{multline}

We are now ready to follow Kuznetsov's recipe for mutation~\cite[Section~4]{kuznetsovcubic} to
show that the interesting part $\TC'$ is equivalent to $\D(\P(W')\times\P(L),\C_0)$.

\subsubsection*{Step~1}
We start from \eqref{lef3} right mutating $\Phi'\left(\D(\P(W')\times\P(L),\C_0)\right)$ through
$\O(-h)\boxtimes\D(\P(L))$. This leads to the following decomposition:
\begin{multline}
    \label{lef4}
    \D(\TH)=\ang[\big]{\O(-h)\boxtimes\D(\P(L)),\Phi''\left(\D(\P(W')\times\P(L),\C_0)\right),\\
        \O\boxtimes\D(\P(L)),\dots\O(2h+H)\boxtimes\D(\P(L))},
\end{multline}
where $\Phi''=\R_{\O(-h)\boxtimes\D(\P(L))}\circ\Phi'$.

\subsubsection*{Step~2}
We now want to mutate $\O(-h)\boxtimes\D(\P(L))$ through the left orthogonal subcategory $^\perp
\ang*{\O(-h)\boxtimes\D(\P(L))}$. The anticanonical bundle of~$\TH$ is
\[
    \omega_{\TH}^{\vee}=i_{\TH}^*\left(\O(2H+h)\boxtimes\omega_{\P(L)}^{\vee}(-1)\right),
\]
and thus we obtain
\begin{multline}
    \label{lef5}
    \D(\TH)=\ang[\big]{\Phi''\left(\D(\P(W')\times\P(L),\C_0)\right),\\
        \O\boxtimes\D(\P(L)),\dots\O(2h+H)\boxtimes\D(\P(L)),\O(2H)\boxtimes\D(\P(L))}.
\end{multline}

\subsubsection*{Step~3}
Transpose the pair $\left(\O(2h+H)\boxtimes\D(\P(L)),\O(2H)\boxtimes\D(\P(L))\right)$. Using the
K\"unneth formula we reduce to showing that
\begin{align*}
    \Ext^{\bullet}_{\widetilde{\P^5}}\left(\O(2h+H),\O(2H)\right)\boxtimes\Ext^{\bullet}_{\P(L)}(F,G)
\end{align*}
vanishes for every $F,G\in\D(\P(L))$. Indeed we have
\begin{align*}
    \Ext^{\bullet}(\O(2h+H),\O(2H))&=H^{\bullet}(\Bl_{\P(W)}\P^5,\pi^*\O_{\P^5}(-1)(2E))\\
    &=H^{\bullet}(\P_{\P(W')}(\underline{W}(1)\oplus\O_{\P(W')}),\pi^*\O_{\P(W)}(-1)\otimes\O_{\phi}(1))\\
    &=H^{\bullet}(\P(W'),\O_{\P(W')}(-1)\otimes(\underline{W}(-1)\oplus\O_{\P(W')}))\\
    &=0.
\end{align*}
After transposing we have
\begin{multline}
    \label{lef6}
    \D(\TH)=\ang[\big]{\Phi''\left(\D(\P(W')\times\P(L),\C_0)\right),\\
        \O\boxtimes\D(\P(L)),\dots\O(2H)\boxtimes\D(\P(L)),\O(2h+H)\boxtimes\D(\P(L))}.
\end{multline}

\subsubsection*{Step~4}
We left mutate $\O(2h+H)\boxtimes\D(\P(L))$ trough the right orthogonal subcategory
$\ang*{\O(2h+H)\boxtimes\D(\P(L))}^{\perp}$. This just means twisting by the canonical bundle which
we computed in Step~2:
\begin{multline}
    \label{lef7}
    \D(\TH)=\ang[\big]{\O(h-H)\boxtimes\D(\P(L)),\Phi''\left(\D(\P(W')\times\P(L),\C_0)\right),\\
        \O\boxtimes\D(\P(L)),\dots\O(2H)\boxtimes\D(\P(L))}.
\end{multline}

\subsubsection*{Step~5}
Left mutation of $\Phi''\left(\D(\P(W')\times\P(L),\C_0)\right)$ through $\O(h-H)\boxtimes\D(\P(L))$
gives us the following decomposition:
\begin{multline}
    \label{lef8}
    \D(\TH)=\ang[\big]{\Phi'''\left(\D(\P(W')\times\P(L),\C_0\right),\O(h-H)\boxtimes\D(\P(L)),\\
        \O\boxtimes\D(\P(L)),\dots\O(2H)\boxtimes\D(\P(L))},
\end{multline}
where $\Phi'''=\L_{\O(h-H)\boxtimes\D(\P(L))}\circ \Phi''$. Let's remark at this point that the
embedding $\Phi'''$ is a kernel functor whose kernel is supported on
$(\P(W')\times\P(L),\C_0)\times_{\P(L)}\TH$. This can be seen as in the first example, except now
one has to pay attention to the fact that we are dealing with a noncommutative variety. Using
technical results of Kuznetsov~\cite[Appendix~D]{kuznetsovhyperplane} one can check that everything
goes through as expected.

\subsubsection*{Step~6}
We want to simultaneously mutate $\O(h-H)\boxtimes\D(\P(L))$ through
$\O_{\tilde{P^5}}\boxtimes\D(\P(L))$, $\O(h)\boxtimes\D(\P(L))$ trough $\O(H)\boxtimes\D(\P(L))$ and
$\O(h+H)\boxtimes\D(\P(L))$ trough $\O(2H)\boxtimes\D(\P(L))$. As we already saw in the previous
example, using Lemma~\ref{lem:mutationembedding} these mutations can be immediately computed. Write 
\[
    \D(\P(L))=\ang{\O_{\P(L)},\O_{\P(L)}(1),\dots,\O_{\P(L)}(l-1)}
\]
and perform the mutations objectwise. Using the K\"unneth formula we get for any $0\leq k'\leq k\leq
l$:
\[
    \dim\Ext^p(\O_{\tilde{\P^5}}(-E)\boxtimes\O_{\P(L)}(k),\O_{\tilde{\P^5}}\boxtimes\O_{\P(L)}(k'))=\begin{cases}
        1&\text{if $p=0$ and $k=k'$,}\\
        0&\text{else.}
    \end{cases}
\]
The computation is the same as in Step 3. The first implication of this is that we can just swap
pieces until we are in the situation of~$k=k'$. In that case then we have the distinguished triangle
\[
    \R_{\O_{\tilde{\P^5}}\boxtimes\O_{\P(L)}(k)}(\O_{\tilde{\P^5}}(-E)\boxtimes\O_{\P(L)}(k))\to\O_{\tilde{\P^5}}(-E)\boxtimes\O_{\P(L)}(k)\to\O_{\tilde{\P^5}}
\]
where the second map is the obvious one. It then follows that
\[
    \R_{\O_{\tilde{\P^5}}\boxtimes\O_{\P(L)}(k)}(\O_{\tilde{\P^5}}(-E)\boxtimes\O_{\P(L)}(k))=j_*\O_E[-1]\boxtimes\O_{\P(L)}(k),
\]
and similarly one also shows that
\begin{gather*}
    \R_{\O_{\tilde{\P^5}}(H)\boxtimes\O_{\P(L)}(k)}(\O_{\tilde{\P^5}}(h)\boxtimes\O_{\P(L)}(k))=j_*\O_E(H)[-1]\boxtimes\O_{\P(L)}(k),\\
    \R_{\O_{\tilde{\P^5}}(2h)\boxtimes\O_{\P(L)}(k)}(\O_{\tilde{\P^5}}(h+H)\boxtimes\O_{\P(L)}(k))=j_*\O_E(2H)[-1]\boxtimes\O_{\P(L)}(k).
\end{gather*}
Finally, for any $k'>k$ we see that
$\R_{\O_{\tilde{\P^5}}\boxtimes\O_{\P(L)}(k)}(\O_{\tilde{\P^5}}(-E)\boxtimes\O_{\P(L)}(k))$ and
$\O_{\tilde{\P^5}}\boxtimes\O_{\P(L)}(k')$ are completely orthogonal and so we get that
\[
    \R_{\O_{\tilde{P^5}}\boxtimes\D(\P(L))}(\O(h-H)\boxtimes\D(\P(L)))=\R_{\O_{\tilde{\P^5}}}(\O_{\tilde{\P^5}}(-E))\boxtimes\D(\P(L)),
\]
and similarly for the other two mutations.  After this step we have the following semiorthogonal
decomposition:
\begin{multline}
    \D(\TH)=\ang[\big]{\Phi'''\left(\D(\P(W')\times\P(L),\C_0\right),\O\boxtimes\D(\P(L)),j_*\O_E\boxtimes\D(\P(L)),\\
        \O(H)\boxtimes\D(\P(L)),j_*\O_E(H)\boxtimes\D(\P(L)),\O(2H)\boxtimes\D(\P(L))\\
        j_*\O_E(2H)\boxtimes\D(\P(L))}.
\end{multline}

\subsubsection*{Step~7}
Finally we want to transpose $j_*\O_E(H)\boxtimes\D(\P(L))$ to the right of
$\O(2H)\boxtimes\D(\P(L))$ and $j_*\O_E\boxtimes\D(\P(L))$ to the right of
$\ang[\big]{\O(H)\boxtimes\D(\P(L)), \O(2H)\boxtimes\D(\P(L))}$. We have to show complete
orthogonality; this follows easily using the same argument of
Kuznetsov~\cite[Lemma~4.6]{kuznetsovcubic}. Indeed
\begin{multline}
    \Ext^{\bullet}_{\TH}\left(\tilde{j}_*\tilde{p}^* F,\tilde{\pi}^*
        G\right)=\Ext^{\bullet}_{\TH}\left(\tilde{p}^* F,\tilde{j}^*\tilde{\pi}^*
        G\otimes\O_{\tilde{E}}(-1)[-1]\right)=\\
    \Ext^{\bullet}_{\TH}\left(\tilde{p}^*
        F,\tilde{p}^*\tilde{i}^*G\otimes\O_{\tilde{E}}(-1)[-1]\right)=\Ext^{\bullet}_{\TH}\left(F,\tilde{i}^*G\otimes\tilde{p_*}\O_{\tilde{E}}(-1)[-1]\right)=0
\end{multline}
where we used the notation of diagram~\eqref{eq:diag1} and the adjunction formula for $j_*$. So
finally we get:
\begin{multline}
    \D(\TH)=\ang[\big]{\Phi'''\left(\D(\P(W')\times\P(L),\C_0\right),\O\boxtimes\D(\P(L)),\O(H)\boxtimes\D(\P(L)),\\
        \O(2H)\boxtimes\D(\P(L)),j_*\O_E\boxtimes\D(\P(L)),j_*\O_E(H)\boxtimes\D(\P(L))\\
        j_*\O_E(2H)\boxtimes\D(\P(L))}
\end{multline}
Comparing with \eqref{lefhyp}, we conclude that $\TC'\cong
\Phi'''\left(\D(\P(W')\times\P(L),\C_0\right)$.

\begin{rem}
    Note again by Theorem~\ref{thm:lefmult} that this is just a family version of Kuznetsov's result
    on cubic fourfolds containing a plane~\cite{kuznetsovcubic}. Again by taking a generic pencil we
    also recover the corresponding result of Calabrese and Thomas~\cite{calabresethomas}.
\end{rem}

\bibliographystyle{amsalpha}
\bibliography{bibliography.bib}
\end{document}